\documentclass{birkau}
\usepackage{amsmath,amssymb,url}
\usepackage{hyperref}
\usepackage{mathtools}
\usepackage{microtype}
\hypersetup{hidelinks}

\numberwithin{equation}{section}

\theoremstyle{plain}
\newtheorem{theorem}{Theorem}[section]
\newtheorem{lemma}[theorem]{Lemma}
\newtheorem{proposition}[theorem]{Proposition}
\newtheorem{corollary}[theorem]{Corollary}

\theoremstyle{definition}
\newtheorem{definition}[theorem]{Definition}
\newtheorem{example}[theorem]{Example}
\newtheorem{remark}[theorem]{Remark}

\begin{document}

\title[D. Pandey: Metric preservation]{Metric preservation under approximate algebraic laws}
\author[D. Pandey]{Daksh Pandey}
\address{Indian Institute of Technology Roorkee\\
Roorkee, Uttarakhand\\
India}
\email{daksh\_p@ph.iitr.ac.in}

\keywords{Quantitative universal algebra, Free metric algebras, Approximate algebraic identities, Associativity defect, Metric preservation, Quantitative equational theories}

\begin{abstract}
We investigate which metric information can be retained when an initial
metric space is realized in a complete algebra satisfying quantitative
laws. Using free metric realizations, we express preservation of designated
distances as constraints on the permitted errors. In a fixed weighted-path
presentation, the associated feasible error budgets form a closed, convex,
upward-closed region with finite-path certificates of failure, and free
term distances depend continuously on the budgets. For an idempotent binary
operation with positive coordinate weights $a+b=1$, we establish sharp
associativity-defect bounds and an exact formula for the generator distances
in the free realization with associativity tolerance $\alpha$:
$\min\{d_X(x,y),\alpha/(ab)\}$. This yields explicit injectivity and
isometry thresholds. We also give a necessary diameter bound when both
idempotence and associativity are approximate, and a countably presented
example whose free distance is continuous but not Lipschitz at zero.
\end{abstract}

\maketitle

% ============================================================
% 1. INTRODUCTION
% ============================================================

\section{Introduction}
\label{sec:introduction}

\subsection{Motivation and the realization problem}
Mathematical abstraction often proceeds by identifying structural features common to existing mathematical objects. Operations, relations, and axioms are extracted from particular settings and formulated independently of the constructions from which they originally arose. Once formulated, these abstractions allow mathematical reasoning to proceed without committing to a particular realization.

A complementary perspective is to regard structural descriptions not only as abstractions of existing mathematics, but also as starting points for mathematical construction. Given a collection of operations, relations, and desired laws, one may ask what mathematical structures can be constructed to realize those requirements. The specification then serves as the input to a constructive problem, rather than merely as a description of structures already under consideration.

An immediate challenge is determining what should count as a successful realization. The existence of a structure satisfying prescribed laws need not imply that the construction retains mathematical information of interest. In particular, ordinary algebraic identities are always satisfied by a one-element algebra. Consequently, a specification may admit a realization even when that realization is too degenerate to retain meaningful distinctions from an initial mathematical setting.

This observation motivates a more refined realization problem: given a structural specification and an initial mathematical space, can one construct a realization satisfying prescribed requirements while preserving designated information from the original space? When the requirements are incompatible with such preservation, one may further ask whether controlled relaxations of the specification permit nontrivial realizations, and how the necessary loss of information depends on the extent of those relaxations.

\subsection{Preservation under structural specifications}

We study the realization problem for metric
spaces equipped with algebraic operations. Let $(X,d_X)$ be an
initial metric space, and suppose that we wish to construct
a metric algebra $(M,d_M)$ satisfying a prescribed collection
of algebraic laws and quantitative regularity conditions.
The construction is accompanied by a map
\[
    f:X\longrightarrow M,
\]
which relates the original space to its realization.

We require this map to be nonexpansive, so that
\[
    d_M(f(x),f(y))\leq d_X(x,y)
    \qquad (x,y\in X).
\]
Although this condition prevents distances from increasing,
it does not prevent them from decreasing. In particular,
distinct elements of $X$ may be identified in $M$, or their
mutual distances may become strictly smaller. Thus,
nonexpansiveness alone does not ensure that the original
metric information is preserved.

There are several possible levels of preservation.
At the strongest level, one may require $f$ to be an
isometric embedding, satisfying
\[
    d_M(f(x),f(y))=d_X(x,y)
    \qquad (x,y\in X).
\]
A weaker requirement is injectivity, which ensures that
distinct elements remain distinct without necessarily
preserving their distances. More generally, one may
specify selected pairs $(x_i,y_i)$ and require
\[
    d_M(f(x_i),f(y_i))\geq r_i,
\]
where $r_i\geq 0$ represents a prescribed minimum
separation. Such lower bounds allow preservation to
be formulated as an explicit constraint on admissible
realizations.

These preservation requirements may conflict with the
algebraic laws imposed on $M$. For instance, a collection
of identities may be realizable only after certain
distances have contracted or some elements have become
identified. Rather than abandoning the desired algebraic
structure, one may permit controlled violations of
selected identities. An equation $s=t$ can then be
replaced by a quantitative requirement of the form
\[
    d_M(s^M(\mathbf{u}),t^M(\mathbf{u}))
    \leq \varepsilon,
\]
where $\varepsilon\geq 0$ is an allowed error and the
inequality is required to hold for every assignment
$\mathbf{u}$ of the variables occurring in the equation.

This leads to the central quantitative problem considered
in this paper: how do the permitted errors in a structural
specification determine the metric information that can
be preserved in its realizations? In particular, we seek
to characterize preservation obstructions, identify
conditions under which faithful realizations exist,
and determine how these conditions change as the
algebraic requirements are relaxed.

\subsection{Main results and contributions}

Our investigation concerns the relationship between quantitative
algebraic specifications and the metric information retained by
their realizations. We work with bounded metric spaces equipped
with finitary algebraic operations satisfying prescribed
Lipschitz conditions and exact or approximate algebraic laws.
Within this setting, we examine realizability together with
explicit requirements on the distances that a realization must
preserve.

The first part of the investigation concerns the general
structure of preservation constraints. Using free complete
metric algebras and their universal properties, we formulate
preservation requirements in terms of distances between
designated elements or algebraic terms. When the permissible
errors in the algebraic laws are controlled by a finite vector
of nonnegative parameters, we show that the corresponding
term distances are monotone, concave, and continuous functions
of these parameters. Moreover, the set of error budgets
permitting a prescribed collection of lower distance bounds
is closed, convex, and upward closed. Failure of a preservation
requirement admits a finite weighted-path certificate.

We then investigate a concrete incompatibility between
algebraic laws and metric preservation. Consider a binary
operation $m$ satisfying idempotence,
\[
    m(x,x)=x,
\]
and the weighted Lipschitz condition
\[
    d(m(x,y),m(x',y'))
    \leq a\,d(x,x')+b\,d(y,y'),
    \qquad a,b>0,\quad a+b=1.
\]
Writing $A(m)$ for its uniform associativity defect and
$D$ for the diameter of the underlying space, we establish
the bounds
\[
    abD\leq A(m)\leq 2abD.
\]
The lower estimate is attained by ordinary weighted averaging for
all such weights; the upper estimate is attained in the equal-weight
case on a geodesic tripod. In particular, exact associativity is
incompatible with these requirements on any nontrivial metric space.

The principal exact realization result concerns the metric information
that survives when associativity is imposed only approximately. If $\alpha\geq 0$ is the
permitted associativity defect and
$\eta_\alpha:X\rightarrow F_\alpha(X)$ is the canonical map
into the corresponding free complete metric algebra, then
\[
    d_{F_\alpha(X)}
    \bigl(\eta_\alpha(x),\eta_\alpha(y)\bigr)
    =
    \min\left\{
        d_X(x,y),\frac{\alpha}{ab}
    \right\}.
\]
Thus exact associativity forces the free realization to
collapse to a singleton, whereas every positive tolerance
preserves the distinctness of the original points.
Full isometric preservation occurs precisely when
$\alpha\geq ab\,\operatorname{diam}(X)$.

Finally, we investigate the stability of metric realizations
under variations in their permitted error budgets. When
both idempotence and associativity are approximate, we
derive an explicit necessary inequality relating their
defects to the diameter of the underlying space.

We further show that continuity of a free metric with
respect to an error parameter need not imply Lipschitz
stability. Specifically, we construct a countably
presented system of metric constraints for which
\[
    \delta_\varepsilon(a_0,b_0)
    =
    \inf_{n\geq 0}
    \left(2n\varepsilon+q^n\right),
    \qquad 0<q<1,
\]
and consequently,
\[
    \delta_\varepsilon(a_0,b_0)
    =
    \frac{2}{\log(1/q)}
    \varepsilon\log\frac{1}{\varepsilon}
    +O(\varepsilon)
    \qquad (\varepsilon\downarrow 0).
\]
Thus, although the induced distance varies continuously
with the permitted tolerance, its sensitivity can become
unbounded near an exact specification.

These results distinguish the existence of realizations
from their quantitative preservation and stability
properties.

Taken together, the results provide a concrete mathematical
investigation of realization under preservation constraints.
They show how structural requirements can impose unavoidable
losses of metric information, while controlled relaxation
can restore distinctness or even complete preservation of
the original metric.

\subsection{Relation to existing work}

The free-completion machinery used here belongs to the established
theory of quantitative universal algebra. Quantitative algebraic
reasoning replaces exact equations between terms with distance
inequalities and supports universal constructions
\cite{Mardare2016,Bacci2026}. We use these methods as a mathematical
setting for preservation questions; we do not claim a new general
construction of free quantitative algebras.

The connection between quantitative laws and concrete metric
constructions is especially clear in free complete Wasserstein
algebras \cite{Mardare2018}. Our averaging example also touches the
classical theory of barycentric operations
\cite{Stone1949,Neumann1970}. These operations satisfy a system of
parameter-dependent barycentric identities; our obstruction concerns
ordinary associativity of one fixed weighted binary operation.
Idempotent operations with separate metric side inequalities have
also been studied directly by Navascu\'{e}s, Rajan, and Chand
\cite{Navascues2022}. The present question concerns quantitative
associativity obstructions and preservation in free realizations,
rather than the general properties of such operations.

Broader extensions of quantitative algebra include generalized metric
and fuzzy-relational settings \cite{Mio2024}, metric equational
theories \cite{Mardare2025}, and quantitative rewriting
\cite{Gavazzo2023}. Related numerical and affine logical reasoning
is developed in \cite{Bacci2023,Bacci2026Lawvere}. Recent work on
compact quantitative theories studies when quantitative consequences
admit finite derivations \cite{Mio2025Compact,MioRiba2026}. Our
finite-path obstruction certificates concern a different, more
specific assertion: a fixed lower-distance requirement that fails at
a specified budget is refuted by a single finite path. They do not
assert compactness of the entire quantitative theory.

The question developed in this paper is how structural requirements
and their relaxations affect the metric information a realization
can retain. The general feasible-region description, the exact
averaging threshold, and the boundary-stability example address
complementary aspects of that question. Their mathematical role is
separate from the established free-algebra framework on which they
are formulated.

% ============================================================
% 2. MATHEMATICAL FRAMEWORK
% ============================================================

\section{Quantitative algebraic specifications and free realizations}
\label{sec:preliminaries}

We fix a setting in which the realization problem can be posed without
choosing an underlying algebra in advance. The free constructions used in
this section belong to the established theory of quantitative algebras
\cite{Mardare2016,Bacci2026}. Our bounded, weighted setting is a
concrete specialization. We recall an explicit weighted-path
presentation because it makes the preservation arguments in
Section~\ref{sec:preservation} transparent.

Throughout, metric spaces are nonempty and have finite-valued metrics.
Both input spaces and admissible target spaces have diameter at most
one. The diameter restriction sets a common scale for the quantitative
laws; it is not a requirement that the canonical generator map be
isometric. All operation symbols have finite arity, and their coordinate
weights are finite and nonnegative.

\subsection{Metric algebras and quantitative laws}

Let $\Sigma$ be a finitary algebraic signature. For each $n$-ary
operation symbol $\sigma\in\Sigma$, prescribe weights
$w_{\sigma,1},\ldots,w_{\sigma,n}\geq 0$. A \emph{metric
$\Sigma$-algebra} is a metric space $(M,d_M)$ together with an
interpretation $\sigma^M:M^n\to M$ for each symbol in $\Sigma$.
We require the coordinatewise estimate
\begin{equation}\label{eq:weighted-operations}
 d_M\bigl(\sigma^M(u_1,\ldots,u_n),
          \sigma^M(v_1,\ldots,v_n)\bigr)
 \leq \sum_{i=1}^{n}w_{\sigma,i}\,d_M(u_i,v_i).
\end{equation}
In particular, every operation is uniformly continuous. The
weights are part of the specification; in general, their sum need
not equal one.

A \emph{quantitative law} consists of a pair of $\Sigma$-terms
$s_j,t_j$ in finitely many variables and a bound
$\varepsilon_j\geq 0$. Its interpretation in $M$ is the universal
inequality
\begin{equation}\label{eq:quant-law}
 d_M\bigl(s_j^M(\mathbf{u}),t_j^M(\mathbf{u})\bigr)
 \leq \varepsilon_j
 \qquad\text{for every assignment }\mathbf{u}\text{ in }M.
\end{equation}
An ordinary algebraic equation is the case $\varepsilon_j=0$.
The collection of symbols, weights, and laws is denoted by
$\mathcal{S}_{\boldsymbol\varepsilon}$. The notation allows
several different laws to share a tolerance parameter.

\begin{definition}\label{def:admissible}
An \emph{admissible realization} of
$\mathcal{S}_{\boldsymbol\varepsilon}$ is a complete, nonempty
metric $\Sigma$-algebra $(M,d_M)$ of diameter at most one in
which \eqref{eq:weighted-operations} and all instances of
\eqref{eq:quant-law} hold. Given a nonempty metric space
$(X,d_X)$ with $\operatorname{diam}X\leq 1$, an \emph{admissible
generator assignment} is a nonexpansive map $f:X\to M$.
\end{definition}

We also allow \emph{ground constraints} for specified terms
$u,v$ built from named generators in $X$, namely inequalities
\[
 d_M(u^M(f),v^M(f))\leq\gamma.
\]
Unlike a universal law,
such an inequality is required only at the designated generators.
In the presence of ground constraints, the phrase ``admissible
generator assignment'' includes the requirement that they hold.
The distinction between universal substitutions and ground
constraints will be important in the path construction and in
Section~\ref{sec:stability}.

\subsection{Term algebras and free metric constructions}

Write $T=T_\Sigma(X)$ for the set of finite formal terms generated
by the elements of $X$ using the operation symbols of $\Sigma$.
The elements of $X$ are treated as named generators; variables
occurring in universal laws are placeholders to which arbitrary
terms may be substituted.

A \emph{one-hole context} $C[-]$ is a term expression with one
distinguished occurrence of a hole. If the path from that hole to
the root passes through coordinate $i$ of operation $\sigma$,
that step contributes the factor $w_{\sigma,i}$. We denote the
product of these factors by $w(C)$; the empty context has
weight one. Thus the weighted regularity condition implies
\begin{equation}\label{eq:context-bound}
 d_M\bigl(C^M[u],C^M[v]\bigr)
 \leq w(C)\,d_M(u,v)
\end{equation}
for every interpretation of the fixed terms appearing in $C$.

We construct a weighted, undirected graph on $T$. Its elementary
edges, with their assigned costs, are as follows:
\begin{enumerate}
\item Between generators $x,y\in X$, an edge of cost $d_X(x,y)$.
\item Between every pair of terms $s,t\in T$, an edge of cost $1$.
\item For each quantitative law, an edge of cost $\varepsilon_j$
between the terms obtained from $s_j,t_j$ by any substitution of
terms for their variables; each specified ground inequality
supplies an edge of its assigned cost.
\item For every edge $u\leftrightarrow v$ of cost $c$ supplied
by the preceding rules and every one-hole context $C[-]$, an
edge $C[u]\leftrightarrow C[v]$ of cost $w(C)c$.
\end{enumerate}
The last rule may equivalently be applied repeatedly. All paths
below are finite, although the graph itself may be infinite.
Define
\begin{equation}\label{eq:path-distance}
 \delta_{\boldsymbol\varepsilon}(s,t)
 :=\inf\left\{\sum_{e\in P}c(e):
            P\text{ is a finite path from }s\text{ to }t\right\}.
\end{equation}
The empty path from a term to itself has cost zero.

\begin{theorem}[Maximal compatible pseudometric]
\label{thm:largest-pseudometric}
The function $\delta_{\boldsymbol\varepsilon}$ is a pseudometric
on $T$ of diameter at most one. It satisfies the coordinatewise
operation bounds, the generator inequalities
\begin{equation}\label{eq:unit-contracts}
 \delta_{\boldsymbol\varepsilon}(x,y)\leq d_X(x,y),
 \qquad x,y\in X,
\end{equation}
and all the specified laws and ground constraints. Moreover, it
is the largest pseudometric on $T$ with these properties.
\end{theorem}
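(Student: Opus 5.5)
Since $\delta_{\boldsymbol\varepsilon}$ is a shortest-path infimum on an undirected graph, we split the proof into two halves. First we verify that $\delta_{\boldsymbol\varepsilon}$ has each listed property, using only the edge rules. Then we prove maximality by an induction along paths.

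\emph{Pseudometric and diameter.} The empty path gives $\delta_{\boldsymbol\varepsilon}(t,t)=0$. Every edge is undirected with a single cost, so reversing a path gives symmetry. Concatenating finite paths gives the triangle inequality. Nonnegativity holds because all costs are nonnegative; here we use $d_X\ge 0$, $\varepsilon_j\geq 0$, $\gamma\ge 0$ and $w(C)\geq 0$. Finiteness and the diameter bound both follow from rule~2, whose single cost-$1$ edge shows $\delta_{\boldsymbol\varepsilon}(s,t)\leq 1$. The generator inequality \eqref{eq:unit-contracts} is witnessed by the one-edge path of rule~1. Each law instance and each ground constraint is witnessed by the one-edge path of rule~3. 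For universal laws this works because rule~3 supplies edges for every substitution of terms for the variables, and an assignment in $T$ is exactly such a substitution.

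\emph{Operation bounds.} Fix $\sigma$ of arity $n$ and terms $u_i,v_i$, and set $D=\sum_i w_{\sigma,i}\delta_{\boldsymbol\varepsilon}(u_i,v_i)$. We change one coordinate at a time, passing through $\sigma(v_1,\dots,v_{i-1},u_i,\dots,u_n)$. For the $i$th step, take any path $P$ from $u_i$ to $v_i$ of cost $c$. Apply rule~4 to each edge of $P$ with the context $C=\sigma(v_1,\dots,v_{i-1},-,u_{i+1},\dots,u_n)$, for which $w(C)=w_{\sigma,i}$. This gives a path of cost $w_{\sigma,i}c$ between the two consecutive intermediate terms. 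Rule~4 may be iterated, so edges already produced by rule~4 can be wrapped again. Concatenating the $n$ steps and taking infima yields $\delta_{\boldsymbol\varepsilon}(\sigma(\mathbf u),\sigma(\mathbf v))\le D$.

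\emph{Maximality.} Let $\rho$ be any pseudometric on $T$ with all the listed properties. It suffices to show $\rho(u,v)\leq c$ for every elementary edge $u\leftrightarrow v$ of cost $c$. The triangle inequality for $\rho$ then bounds $\rho(s,t)$ by the cost of every finite path, hence by $\delta_{\boldsymbol\varepsilon}(s,t)$. We check the edges rule by rule.
\begin{itemize}
\item Rule~1 edges are handled by \eqref{eq:unit-contracts} for $\rho$.
\item Rule~2 edges are handled by $\operatorname{diam}\rho\le 1$.
\item Rule~3 edges are handled because $\rho$ satisfies the laws under every substitution, and also satisfies the ground constraints.
\end{itemize}
Rule~4 is the main obstacle, because it is the only rule that refers to other edges. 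We handle it by induction on the number of rule applications used to produce an edge, and, inside that, by induction on the depth of the hole in $C$. The key claim is that the coordinatewise bounds for $\rho$ imply the context bound \eqref{eq:context-bound} for $\rho$. To see this, write $C[-]=\sigma(t_1,\dots,C'[-],\dots,t_n)$ with the hole in coordinate $i$. Since $\rho(t_k,t_k)=0$ for the untouched coordinates,
\[
\rho(C[u],C[v])\le w_{\sigma,i}\,\rho(C'[u],C'[v])\le w_{\sigma,i}\,w(C')\,\rho(u,v)=w(C)\,\rho(u,v).
\]
Combined with the induction hypothesis $\rho(u,v)\le c$ for the underlying edge, this gives $\rho(C[u],C[v])\le w(C)c$. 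This closes the induction and proves $\rho\le\delta_{\boldsymbol\varepsilon}$.
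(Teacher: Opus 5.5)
Your proof is correct and follows the paper's argument. It uses the same ingredients: path reversal and concatenation for the pseudometric axioms, single-edge witnesses for the generator, law and ground bounds, coordinate-by-coordinate contextual lifting of paths for the operation bounds, and maximality by bounding each edge's $\rho$-distance and summing along paths via the triangle inequality. The one difference is that you derive the context bound for $\rho$ explicitly from its coordinatewise bounds by induction on hole depth, whereas the paper just cites \eqref{eq:context-bound}, which it had stated for metric algebras; this is a useful clarification rather than a different route.
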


\begin{proof}
Symmetry follows by reversing paths, the diagonal vanishes by
the empty path, and concatenation of paths gives the triangle
inequality. The cost-one edges force diameter at most one.
Generator and law bounds hold because each is represented by a
single edge.

To verify regularity of operations, fix all but one coordinate of
$\sigma$ and insert each edge of a path from $s$ to $t$ into that
coordinate. The resulting path has cost multiplied by
$w_{\sigma,i}$. Taking the infimum over paths gives
\[
 \delta_{\boldsymbol\varepsilon}
 \bigl(\sigma(\ldots,s,\ldots),
       \sigma(\ldots,t,\ldots)\bigr)
 \leq w_{\sigma,i}\delta_{\boldsymbol\varepsilon}(s,t).
\]
Changing coordinates one at a time proves the full estimate
\eqref{eq:weighted-operations} for terms.

Conversely, let $\rho$ be any pseudometric satisfying the stated
bounds. Each generating edge has $\rho$-distance at most its
assigned cost. By \eqref{eq:context-bound}, the same is true for
its contextual images. The triangle inequality therefore gives
$\rho(s,t)\leq\sum_{e\in P}c(e)$ for every path $P$ from
$s$ to $t$. Taking the infimum proves
$\rho(s,t)\leq\delta_{\boldsymbol\varepsilon}(s,t)$.
\end{proof}

The maximality statement is relevant to preservation: the
construction does not decrease term distances beyond what the
quantitative constraints force. It is therefore natural to use
its distances to measure what a specification permits.

\subsection{Universal properties and canonical maps}

Set $s\sim t$ when $\delta_{\boldsymbol\varepsilon}(s,t)=0$.
The operation bounds show that $\sim$ is a congruence: replacing
any input of an operation by an equivalent term leaves its output
equivalent. Hence $T/\!\sim$ is a metric $\Sigma$-algebra with
uniformly continuous operations. Let
\begin{equation}\label{eq:free-realization}
 F_{\boldsymbol\varepsilon}(X)
 :=\widehat{T/\!\sim}
\end{equation}
be its metric completion, with operations extended by continuity.
Write
\[
 \eta_{\boldsymbol\varepsilon}:X\longrightarrow
              F_{\boldsymbol\varepsilon}(X)
\]
for the canonical generator map. It is nonexpansive by
\eqref{eq:unit-contracts}.

\begin{theorem}[Free complete realization]
\label{thm:free-realization}
The completion $F_{\boldsymbol\varepsilon}(X)$ is admissible.
For every admissible realization $M$ and every admissible
generator assignment $f:X\to M$, there is a unique
nonexpansive $\Sigma$-homomorphism
\[
 \overline f:F_{\boldsymbol\varepsilon}(X)\longrightarrow M,
 \qquad \overline f\circ\eta_{\boldsymbol\varepsilon}=f.
\]
The statement also holds for presentations with ground
constraints, provided $f$ respects them.
\end{theorem}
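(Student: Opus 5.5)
The plan has two parts: show that $F_{\boldsymbol\varepsilon}(X)$ is admissible, and then establish the universal property by the usual sequence extend–descend–complete. Both parts rest on Theorem~\ref{thm:largest-pseudometric}.

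\emph{Admissibility.} By construction $F_{\boldsymbol\varepsilon}(X)$ is complete and nonempty. Its diameter is at most one, since $\delta_{\boldsymbol\varepsilon}\leq 1$ on $T$ and this bound passes to the completion. The coordinatewise bounds \eqref{eq:weighted-operations} hold on $T/\!\sim$ by Theorem~\ref{thm:largest-pseudometric}. They therefore extend to the completion, because both sides of \eqref{eq:weighted-operations} are continuous in all arguments. The operations extend uniquely because they are Lipschitz.

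For the laws, the key point is that \eqref{eq:quant-law} is a universal statement over all assignments in the completion, not just term assignments. I will handle this by density. Each term function $s_j^M$ is itself Lipschitz, since it is a composite of Lipschitz operations. Given an assignment $\mathbf{u}$ in $F_{\boldsymbol\varepsilon}(X)$, I approximate it coordinatewise by classes of terms. For term assignments the inequality holds because a substitution instance is a single edge of cost $\varepsilon_j$. Passing to the limit, the closed inequality $d\leq\varepsilon_j$ survives. Ground constraints involve only generators, so they hold directly by their edges. The map $\eta_{\boldsymbol\varepsilon}$ is nonexpansive by \eqref{eq:unit-contracts}.

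\emph{Universal property.} Let $M$ and $f$ be given.
\begin{enumerate}
\item Extend $f$ to the unique $\Sigma$-homomorphism $f^\sharp:T\to M$ by structural recursion.
\item Define $\rho(s,t):=d_M(f^\sharp s,f^\sharp t)$. This is a pseudometric on $T$.
\item Check that $\rho$ satisfies the hypotheses of Theorem~\ref{thm:largest-pseudometric}:
 \begin{itemize}
 \item its diameter is at most one, because $\operatorname{diam}M\leq1$;
 \item it satisfies the operation bounds, since $f^\sharp$ is a homomorphism and $M$ satisfies \eqref{eq:weighted-operations};
 \item it satisfies the generator inequalities, since $f$ is nonexpansive;
 \item it satisfies the laws, since each substitution instance maps under $f^\sharp$ to an assignment in $M$;
 \item it satisfies the ground constraints, because $f$ respects them by hypothesis.
 \end{itemize}
\item Conclude from maximality that $\rho\leq\delta_{\boldsymbol\varepsilon}$.
\item Deduce that $f^\sharp$ is constant on $\sim$-classes and descends to a nonexpansive homomorphism $T/\!\sim\;\to M$.
\item Extend it, using completeness of $M$, to a nonexpansive map $\overline f$ on the completion. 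This map is still a homomorphism, because both $\overline f\circ\sigma$ and $\sigma^M\circ\overline f^n$ are continuous and agree on a dense set.
\end{enumerate}

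\emph{Uniqueness.} Any nonexpansive homomorphism $g$ with $g\circ\eta_{\boldsymbol\varepsilon}=f$ agrees with $f^\sharp$ on classes of terms, by induction on terms. Since $g$ is continuous and the term classes are dense, $g=\overline f$.

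I expect the only genuinely delicate step to be the verification of the universal laws on the completion, because it requires assignments that are not term classes. It is settled by the Lipschitz continuity of term functions together with density, as described above. Everything else is routine bookkeeping.
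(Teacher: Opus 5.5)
Your proposal is correct and follows essentially the same route as the paper. You obtain admissibility of the completion by continuity and density, show that evaluation of terms in $M$ is $\delta_{\boldsymbol\varepsilon}$-nonexpansive, pass to $T/\!\sim$, extend using completeness of $M$, and get uniqueness by density; the only cosmetic difference is that you derive $\rho\leq\delta_{\boldsymbol\varepsilon}$ from the maximality clause of Theorem~\ref{thm:largest-pseudometric}, whereas the paper re-checks the edge bounds directly, which is exactly the content of that maximality argument.
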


\begin{proof}
The quotient operations satisfy the required weighted bounds.
They are uniformly continuous and therefore extend uniquely to
operations on the completion. Since every law is a distance
inequality between interpretations of finitary terms, it passes
to the completion by continuity. Thus the completed algebra is
admissible.

For the universal property, evaluate each formal term in $M$
using $f$. Evaluation respects every edge bound: generator edges
by nonexpansiveness, cost-one edges by the diameter bound, law
edges by admissibility, and contextual edges by
\eqref{eq:context-bound}. Consequently,
\begin{equation}\label{eq:evaluation-contracts}
 d_M\bigl(s^M(f),t^M(f)\bigr)
 \leq\delta_{\boldsymbol\varepsilon}(s,t)
 \qquad(s,t\in T).
\end{equation}
Evaluation thus descends to a nonexpansive homomorphism on
$T/\!\sim$ and extends uniquely to its completion, since $M$
is complete. Continuity of the operations ensures that the
extension is a homomorphism. Density of the term quotient
implies uniqueness.
\end{proof}

\begin{corollary}[Exact semantic distances]
\label{cor:semantic-distances}
For all $s,t\in T$,
\begin{equation}\label{eq:semantic-distance}
 \delta_{\boldsymbol\varepsilon}(s,t)
 =\sup_{(M,f)}d_M\bigl(s^M(f),t^M(f)\bigr),
\end{equation}
where $M$ ranges over admissible complete realizations and $f$
over admissible nonexpansive generator assignments. The supremum
is attained simultaneously for every term pair by
$(F_{\boldsymbol\varepsilon}(X),\eta_{\boldsymbol\varepsilon})$.
\end{corollary}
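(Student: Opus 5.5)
We prove the two inequalities separately: the general bound by the universal property, and the reverse bound by exhibiting the free realization as a competitor.

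For the upper bound, fix an admissible complete realization $M$ and an admissible nonexpansive generator assignment $f\colon X\to M$. Theorem~\ref{thm:free-realization} gives a unique nonexpansive homomorphism $\overline f\colon F_{\boldsymbol\varepsilon}(X)\to M$ with $\overline f\circ\eta_{\boldsymbol\varepsilon}=f$. We do not even need $\overline f$: inequality \eqref{eq:evaluation-contracts} from that proof already says $d_M(s^M(f),t^M(f))\leq\delta_{\boldsymbol\varepsilon}(s,t)$ for all terms $s,t$. Taking the supremum over all pairs $(M,f)$ gives the inequality ``$\geq$'' in \eqref{eq:semantic-distance}. If ground constraints are present, they are respected because $f$ is admissible, and the same inequality applies.

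For the lower bound and attainment, we show that $(F_{\boldsymbol\varepsilon}(X),\eta_{\boldsymbol\varepsilon})$ is itself one of the competitors. By Theorem~\ref{thm:free-realization}, $F_{\boldsymbol\varepsilon}(X)$ is admissible: it is complete and nonempty, its diameter is at most one by Theorem~\ref{thm:largest-pseudometric} (the bound passes to the completion), and it satisfies the weighted operation bounds and the laws. The map $\eta_{\boldsymbol\varepsilon}$ is nonexpansive by \eqref{eq:unit-contracts}. It satisfies the ground constraints because each constraint is a single edge of the graph, and so holds for $\delta_{\boldsymbol\varepsilon}$.

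The key remaining step, and the only one needing care, is to identify the interpretation $s^{F}(\eta_{\boldsymbol\varepsilon})$ with the image of the class $[s]$ in the completion. We prove this by structural induction on $s$. For a generator $x$, $x^F(\eta)=\eta(x)=[x]$ by definition. For a term $\sigma(s_1,\ldots,s_n)$, the operation on $T/\!\sim$ is defined on representatives, so it sends $([s_1],\ldots,[s_n])$ to $[\sigma(s_1,\ldots,s_n)]$, and the operation on the completion extends it. Once this is established,
\[
 d_{F}\bigl(s^F(\eta_{\boldsymbol\varepsilon}),t^F(\eta_{\boldsymbol\varepsilon})\bigr)=d_F([s],[t])=\delta_{\boldsymbol\varepsilon}(s,t),
\]
since the quotient metric is induced by $\delta_{\boldsymbol\varepsilon}$ and the completion embedding is isometric. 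This gives ``$\leq$'' in \eqref{eq:semantic-distance}.

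Finally, the witnessing pair $(F_{\boldsymbol\varepsilon}(X),\eta_{\boldsymbol\varepsilon})$ does not depend on $s$ and $t$. The supremum is therefore a maximum, attained simultaneously for every pair of terms.
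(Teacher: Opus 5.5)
Your proposal is correct and takes the paper's approach: the bound $\sup_{(M,f)} d_M\bigl(s^M(f),t^M(f)\bigr)\le\delta_{\boldsymbol\varepsilon}(s,t)$ comes directly from \eqref{eq:evaluation-contracts}, and the reverse bound, with simultaneous attainment, comes from observing that $(F_{\boldsymbol\varepsilon}(X),\eta_{\boldsymbol\varepsilon})$ is itself an admissible competitor whose term distances are exactly $\delta_{\boldsymbol\varepsilon}$. Your structural induction identifying $s^{F}(\eta_{\boldsymbol\varepsilon})$ with the image of the class of $s$, and your check that $\eta_{\boldsymbol\varepsilon}$ respects the ground constraints, spell out steps that the paper's two-line proof leaves implicit.
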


\begin{proof}
Inequality \eqref{eq:evaluation-contracts} gives one direction.
The free realization itself is among the models in the supremum,
and its term distances equal $\delta_{\boldsymbol\varepsilon}$.
\end{proof}

The free construction and semantic principle above are
instances of established quantitative-algebraic methods
\cite{Mardare2016,Bacci2026}. The questions that follow concern
their consequences for designated metric information and for
varying quantitative specifications. In particular, the next section
treats lower-distance requirements as additional conditions on the
realization, rather than as laws used to define the free algebra.

% ============================================================
% 3. PRESERVATION
% ============================================================

\section{Metric preservation and the geometry of relaxation}
\label{sec:preservation}

We now regard preservation requirements as additional data of
the realization problem. Quantitative laws bound some distances
\emph{above}; preservation requirements constrain selected
distances \emph{below}. Whether the two kinds of bounds are
compatible is a question about the specification as a whole.

\subsection{Preservation requirements}

Fix a family of pairs of terms $(u_i,v_i)$ and thresholds
$r_i\geq 0$, indexed by a set $I$. A realization with a generator
assignment $f$ \emph{preserves} these protected distances when
\begin{equation}\label{eq:protected-distances}
 d_M\bigl(u_i^M(f),v_i^M(f)\bigr)\geq r_i
 \qquad(i\in I).
\end{equation}
The special case $u_i=x_i$, $v_i=y_i\in X$ records protected
generator distances. Choosing $r_i=d_X(x_i,y_i)$ expresses
isometry on the chosen pairs, because $f$ is already
nonexpansive. Protecting all generator pairs in this way is
equivalent to the existence of an isometric generator
assignment. In contrast, injectivity requires positive
separation of each distinct pair but does not demand a common
positive threshold.

\begin{proposition}[Simultaneous preservation criterion]
\label{prop:preservation-criterion}
There is an admissible complete realization satisfying all the
requirements \eqref{eq:protected-distances} if and only if
\begin{equation}\label{eq:preservation-criterion}
 \delta_{\boldsymbol\varepsilon}(u_i,v_i)\geq r_i
 \qquad\text{for every }i\in I.
\end{equation}
Whenever these conditions hold, the single free realization
$F_{\boldsymbol\varepsilon}(X)$ witnesses all of them at once.
\end{proposition}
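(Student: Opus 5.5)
The plan is to deduce both directions directly from Corollary~\ref{cor:semantic-distances}, since the criterion is simply the statement that the supremum in \eqref{eq:semantic-distance} dominates each threshold, together with the fact that this supremum is attained uniformly by the free realization.

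For necessity, suppose $(M,f)$ is an admissible complete realization with an admissible generator assignment satisfying \eqref{eq:protected-distances}. The evaluation inequality \eqref{eq:evaluation-contracts}, established in the proof of Theorem~\ref{thm:free-realization}, gives for each $i\in I$
\[
 r_i\leq d_M\bigl(u_i^M(f),v_i^M(f)\bigr)\leq\delta_{\boldsymbol\varepsilon}(u_i,v_i).
\]
This is exactly \eqref{eq:preservation-criterion}. If ground constraints are present, $f$ is required to respect them, so Theorem~\ref{thm:free-realization} and the evaluation bound still apply.

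For sufficiency, and for the final assertion, take $M=F_{\boldsymbol\varepsilon}(X)$ and $f=\eta_{\boldsymbol\varepsilon}$. Theorem~\ref{thm:free-realization} shows that this is an admissible complete realization, and $\eta_{\boldsymbol\varepsilon}$ is nonexpansive by \eqref{eq:unit-contracts}. It also satisfies the ground constraints, because each of them is a single edge of the path graph. The key point is that the interpretation of a term $u$ under $\eta_{\boldsymbol\varepsilon}$ is the class of $u$ in $T/\!\sim\subseteq F_{\boldsymbol\varepsilon}(X)$. Hence
\[
 d_{F_{\boldsymbol\varepsilon}(X)}\bigl(u^{F}(\eta_{\boldsymbol\varepsilon}),v^{F}(\eta_{\boldsymbol\varepsilon})\bigr)=\delta_{\boldsymbol\varepsilon}(u,v).
\]
This holds because the completion is isometric on the dense quotient. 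It is exactly the attainment clause of Corollary~\ref{cor:semantic-distances}. Consequently \eqref{eq:preservation-criterion} yields \eqref{eq:protected-distances} for all $i\in I$ at once, in a single realization, regardless of the cardinality of $I$.

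The main subtlety is therefore the identification of term interpretations in $F_{\boldsymbol\varepsilon}(X)$ with their classes. I would verify it by induction on terms: generators map to their classes by the definition of $\eta_{\boldsymbol\varepsilon}$, and the operations on the completion restrict to the quotient operations on classes. The other point to note is that the threshold condition is a family of separate inequalities with no uniformity required. Because one model attains every supremum simultaneously, no compactness or choice argument over $I$ is needed. This is what distinguishes the simultaneous statement from a pairwise one.
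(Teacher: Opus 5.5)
Your proposal is correct and follows essentially the same route as the paper: necessity comes from the evaluation bound \eqref{eq:evaluation-contracts} (the content of Corollary~\ref{cor:semantic-distances}), and sufficiency comes from the fact that $(F_{\boldsymbol\varepsilon}(X),\eta_{\boldsymbol\varepsilon})$ attains every $\delta_{\boldsymbol\varepsilon}(u_i,v_i)$ simultaneously. Your extra checks, namely that the ground constraints hold in the free model and that term interpretations coincide with their classes by induction, only make explicit what the paper's two-line proof leaves implicit.
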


\begin{proof}
Every admissible evaluation is nonexpansive for
$\delta_{\boldsymbol\varepsilon}$ by
Corollary~\ref{cor:semantic-distances}, so the inequalities
\eqref{eq:preservation-criterion} are necessary. Conversely,
the canonical interpretation in the free realization attains
every distance $\delta_{\boldsymbol\varepsilon}(u_i,v_i)$,
so the same free object satisfies all the protected bounds.
\end{proof}

This simultaneous attainment is useful: an optimization over
admissible models does not require a different witness for each
protected pair. It also makes the metric-preservation question
precise without choosing a special concrete target space.

\subsection{Quantitative realizability}

Suppose from now on that the variable tolerances are controlled
by a finite vector
\[
 \boldsymbol\varepsilon=(\varepsilon_1,\ldots,\varepsilon_k)
 \in[0,\infty)^k.
\]
The signature, coordinate weights, input metric, and the
non-variable constraints remain fixed as
$\boldsymbol\varepsilon$ changes. A law instance with tolerance
$\varepsilon_j$ contributes that parameter multiplied by the
weight of any context in which the instance occurs. Hence each
finite path $P$ in the term graph has cost
\begin{equation}\label{eq:affine-path-cost}
 \operatorname{cost}_{\boldsymbol\varepsilon}(P)
 =c_P+\boldsymbol\beta_P\cdot\boldsymbol\varepsilon,
 \qquad c_P\geq 0,\quad
 \boldsymbol\beta_P\in[0,\infty)^k.
\end{equation}
The coefficients depend on the fixed presentation and the
chosen path, but not on the numerical budget values. Thus
\begin{equation}\label{eq:infimum-affine}
 \delta_{\boldsymbol\varepsilon}(s,t)
 =\inf_{P:s\rightsquigarrow t}
   \bigl(c_P+\boldsymbol\beta_P\cdot
                     \boldsymbol\varepsilon\bigr).
\end{equation}

\begin{proposition}[Concavity and monotonicity]
\label{prop:monotone-concave}
For every fixed term pair $s,t$, the function
$\boldsymbol\varepsilon\mapsto
\delta_{\boldsymbol\varepsilon}(s,t)$ is coordinatewise
nondecreasing and concave on $[0,\infty)^k$.
\end{proposition}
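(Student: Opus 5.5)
The plan is to read both properties directly off the representation \eqref{eq:infimum-affine}: $\delta_{\boldsymbol\varepsilon}(s,t)$ is the pointwise infimum of the family of affine functions $\boldsymbol\varepsilon\mapsto c_P+\boldsymbol\beta_P\cdot\boldsymbol\varepsilon$, indexed by finite paths $P$ from $s$ to $t$. The first step is to confirm that this index set does not depend on $\boldsymbol\varepsilon$. Concretely, I would check that the term graph has the same edges for every budget, and that only the edge costs change.

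\textbf{Uniformity of the index set.} Every generator edge, cost-one edge and fixed ground constraint has a cost independent of $\boldsymbol\varepsilon$. A law edge with tolerance $\varepsilon_j$ is present for every value of $\varepsilon_j\geq 0$, including $0$. A contextual image of an edge has cost equal to the fixed factor $w(C)$ times the original cost. Summing along a path therefore gives the form \eqref{eq:affine-path-cost}, with $c_P\geq 0$ and $\boldsymbol\beta_P\in[0,\infty)^k$, since all weights and costs are nonnegative. The index set is nonempty, because the direct cost-one edge from $s$ to $t$ (or the empty path when $s=t$) always exists. Consequently the infimum is finite and lies in $[0,1]$.

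\textbf{Monotonicity.} Suppose $\boldsymbol\varepsilon\leq\boldsymbol\varepsilon'$ coordinatewise. Since every coefficient vector $\boldsymbol\beta_P$ has nonnegative entries, each path cost satisfies $c_P+\boldsymbol\beta_P\cdot\boldsymbol\varepsilon\leq c_P+\boldsymbol\beta_P\cdot\boldsymbol\varepsilon'$. Taking infima over the common index set gives $\delta_{\boldsymbol\varepsilon}(s,t)\leq\delta_{\boldsymbol\varepsilon'}(s,t)$.

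\textbf{Concavity.} Take $\lambda\in[0,1]$ and set $\boldsymbol\varepsilon_\lambda=\lambda\boldsymbol\varepsilon+(1-\lambda)\boldsymbol\varepsilon'$. For any single path $P$, affinity gives
\[
\operatorname{cost}_{\boldsymbol\varepsilon_\lambda}(P)=\lambda\operatorname{cost}_{\boldsymbol\varepsilon}(P)+(1-\lambda)\operatorname{cost}_{\boldsymbol\varepsilon'}(P).
\]
Each term on the right is bounded below by the corresponding infimum, so
\[
\operatorname{cost}_{\boldsymbol\varepsilon_\lambda}(P)\geq\lambda\,\delta_{\boldsymbol\varepsilon}(s,t)+(1-\lambda)\,\delta_{\boldsymbol\varepsilon'}(s,t).
\]
Taking the infimum over $P$ yields concavity, since an infimum of affine functions is concave.

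The main obstacle is the bookkeeping behind the uniformity step rather than any inequality. One point is that a single law may be applied with different tolerance symbols sharing a parameter. Another is that contextual images must be allowed to nest arbitrarily. In both cases the effect is only to add nonnegative multiples $w(C)$ to the coordinates of $\boldsymbol\beta_P$, so the affine form persists. I would therefore state explicitly that the edge set is budget-independent before running the two short infimum arguments above.
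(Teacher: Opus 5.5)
Your proposal is correct and follows the paper's proof essentially verbatim. Monotonicity comes from $\boldsymbol\beta_P\geq 0$, and concavity comes from bounding each affine path cost at the convex combination below by the corresponding convex combination of infima, then taking the infimum over $P$. Your explicit check that the edge set and path index set do not depend on the budget is implicit in the paper's derivation of \eqref{eq:infimum-affine}, and it is a reasonable addition.
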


\begin{proof}
Every affine path cost in \eqref{eq:infimum-affine} is
coordinatewise nondecreasing, since
$\boldsymbol\beta_P\geq 0$. Their infimum is therefore also
nondecreasing. For $0\leq\theta\leq1$, every path satisfies
\begin{align*}
 &c_P+\boldsymbol\beta_P\cdot
       \bigl(\theta\boldsymbol\varepsilon
            +(1-\theta)\boldsymbol\varepsilon'\bigr)\\
 &\qquad=\theta\bigl(c_P+\boldsymbol\beta_P\cdot
                                      \boldsymbol\varepsilon\bigr)
 +(1-\theta)\bigl(c_P+\boldsymbol\beta_P\cdot
                                      \boldsymbol\varepsilon'\bigr)\\
 &\qquad\geq
 \theta\delta_{\boldsymbol\varepsilon}(s,t)
 +(1-\theta)\delta_{\boldsymbol\varepsilon'}(s,t).
\end{align*}
Taking the infimum over $P$ yields concavity.
\end{proof}

\subsection{Geometry of admissible error budgets}

Let $\mathcal R$ denote the set of budgets for which some
complete admissible realization satisfies all of the protected
bounds \eqref{eq:protected-distances}. By
Proposition~\ref{prop:preservation-criterion},
\[
 \mathcal R
 =\left\{\boldsymbol\varepsilon\in[0,\infty)^k:
 \delta_{\boldsymbol\varepsilon}(u_i,v_i)\geq r_i
 \text{ for every }i\in I\right\}.
\]

\begin{theorem}[Geometry of preservation budgets]
\label{thm:convex-feasibility}
The set $\mathcal R$ is closed, convex, and upward closed in
$[0,\infty)^k$. More precisely,
\begin{equation}\label{eq:halfspace-description}
 \mathcal R
 =\bigcap_{i\in I}\ \bigcap_{P:u_i\rightsquigarrow v_i}
 \left\{\boldsymbol\varepsilon\in[0,\infty)^k:
 c_P+\boldsymbol\beta_P\cdot\boldsymbol\varepsilon
 \geq r_i\right\}.
\end{equation}
If $\boldsymbol\varepsilon\notin\mathcal R$, then there is a
protected pair and a finite path whose cost is strictly below
its required separation.
\end{theorem}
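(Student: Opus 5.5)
The plan is to derive everything from the affine-infimum representation \eqref{eq:infimum-affine} together with the description of $\mathcal R$ supplied by Proposition~\ref{prop:preservation-criterion}. The first step is the half-space formula \eqref{eq:halfspace-description}. For a fixed protected pair $i$, the inequality $\delta_{\boldsymbol\varepsilon}(u_i,v_i)\geq r_i$ says that an infimum of the numbers $c_P+\boldsymbol\beta_P\cdot\boldsymbol\varepsilon$ is at least $r_i$. An infimum is bounded below by $r_i$ exactly when every member of the family is, so this condition is equivalent to $c_P+\boldsymbol\beta_P\cdot\boldsymbol\varepsilon\geq r_i$ for every finite path $P:u_i\rightsquigarrow v_i$. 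Intersecting over $i\in I$ gives \eqref{eq:halfspace-description}.

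The topological and order properties then follow from this description. Each set $\{\boldsymbol\varepsilon:c_P+\boldsymbol\beta_P\cdot\boldsymbol\varepsilon\geq r_i\}$ is a closed affine half-space intersected with the closed orthant $[0,\infty)^k$. It is therefore closed and convex, and an arbitrary intersection of such sets keeps both properties. It is also upward closed: since $\boldsymbol\beta_P\geq 0$, replacing $\boldsymbol\varepsilon$ by a coordinatewise larger $\boldsymbol\varepsilon'$ can only increase $\boldsymbol\beta_P\cdot\boldsymbol\varepsilon$. Alternatively, upward closure and convexity follow from Proposition~\ref{prop:monotone-concave}, because superlevel sets of a concave function are convex and monotonicity preserves the lower bound. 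Closedness is the point where the half-space description is genuinely needed. The function $\boldsymbol\varepsilon\mapsto\delta_{\boldsymbol\varepsilon}(u_i,v_i)$ is a finite concave function on the closed orthant, and I would not want to rely on its upper semicontinuity at boundary points. The description as an intersection of closed sets avoids this issue.

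The last step is the finite certificate. Suppose $\boldsymbol\varepsilon\notin\mathcal R$. Then some $i$ has $\delta_{\boldsymbol\varepsilon}(u_i,v_i)<r_i$. Because $\delta$ is defined as an infimum over finite paths in \eqref{eq:path-distance}, the definition of infimum yields a finite path $P$ from $u_i$ to $v_i$ with $c_P+\boldsymbol\beta_P\cdot\boldsymbol\varepsilon<r_i$. Equivalently, $\boldsymbol\varepsilon$ lies outside the specific half-space indexed by $(i,P)$.

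I do not expect a serious obstacle. The only care needed is in justifying \eqref{eq:affine-path-cost}. A path is a fixed finite sequence of edges, each belonging to one of three kinds: a generator edge, a cost-one edge, a fixed ground edge, or a contextual image of one of these, with constant cost; or a contextual image of a law instance with tolerance $\varepsilon_j$, with cost $w(C)\varepsilon_j$. Summing the edge costs gives a constant $c_P\geq 0$ plus a linear term with nonnegative coefficients. The path and its edge labels are fixed independently of the numerical budget, since the edges themselves do not depend on $\boldsymbol\varepsilon$, only their costs do. This is what makes the intersection in \eqref{eq:halfspace-description} range over a budget-independent index set.
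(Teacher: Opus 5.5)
Your proposal is correct and matches the paper's proof step for step. The half-space description comes from the fact that an infimum of the affine path costs is at least $r_i$ exactly when every path cost is. Closedness, convexity and upward closure hold because each half-space has a nonnegative normal $\boldsymbol\beta_P$, and the finite certificate comes from the definition of the infimum. (Your caution about upper semicontinuity is unnecessary: an infimum of continuous affine functions is automatically upper semicontinuous, which is exactly what the half-space description expresses. It does not affect the argument.)
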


\begin{proof}
By \eqref{eq:infimum-affine}, the inequality
$\delta_{\boldsymbol\varepsilon}(u_i,v_i)\geq r_i$ holds
exactly when every path from $u_i$ to $v_i$ has cost at least
$r_i$. This proves \eqref{eq:halfspace-description}. Each set
in the intersection is a closed halfspace restricted to the
nonnegative orthant; its normal vector
$\boldsymbol\beta_P$ has nonnegative coordinates.
Consequently the intersection is closed, convex, and upward
closed. If it fails, some path has cost below the corresponding
threshold by the definition of infimum.
\end{proof}

There is also a semantic explanation of convexity. If complete
models $M_1,M_2$ witness preservation at budgets
$\boldsymbol\varepsilon,\boldsymbol\varepsilon'$, equip
$M_1\times M_2$ with the metric
\[
 d_\theta\bigl((x_1,x_2),(y_1,y_2)\bigr)
 =\theta d_1(x_1,y_1)+(1-\theta)d_2(x_2,y_2),
 \qquad 0<\theta<1.
\]
With coordinatewise operations and paired generator
assignments, this is a complete admissible model at budget
$\theta\boldsymbol\varepsilon
+(1-\theta)\boldsymbol\varepsilon'$ preserving every protected
lower bound. This argument uses the freedom to choose a
product carrier; the corresponding set of constraints on one
rigidly fixed carrier need not be convex.

The infinite intersection in
\eqref{eq:halfspace-description} should not be confused with a
finite linear program: there can be infinitely many paths, and
no finite halfspace description or general algorithm is claimed.
For perspective, affine and numerical fragments of quantitative
logic have been studied separately \cite{Bacci2023,Bacci2026Lawvere}.

The dependence of individual distances on tolerances is also
regular, including when some budgets vanish.

\begin{theorem}[Continuity of free distances]
\label{thm:tolerance-continuity}
For every fixed term pair $s,t$, the map
$\boldsymbol\varepsilon\mapsto
\delta_{\boldsymbol\varepsilon}(s,t)$ is continuous on the
entire orthant $[0,\infty)^k$. In particular,
coordinatewise decreasing sequences of budgets have the
corresponding limit of term distances. If every coordinate of
$\boldsymbol\varepsilon$ and $\boldsymbol\varepsilon'$ is at
least $h>0$, then
\begin{equation}\label{eq:interior-lipschitz}
 \left|\delta_{\boldsymbol\varepsilon}(s,t)
       -\delta_{\boldsymbol\varepsilon'}(s,t)\right|
 \leq h^{-1}\lVert\boldsymbol\varepsilon
                 -\boldsymbol\varepsilon'\rVert_\infty.
\end{equation}
\end{theorem}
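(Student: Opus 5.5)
The whole argument will run through the representation \eqref{eq:infimum-affine}. It writes $g(\boldsymbol\varepsilon):=\delta_{\boldsymbol\varepsilon}(s,t)$ as an infimum of affine functions $c_P+\boldsymbol\beta_P\cdot\boldsymbol\varepsilon$ with $c_P\ge0$ and $\boldsymbol\beta_P\ge0$. By Theorem~\ref{thm:largest-pseudometric} we also have $0\le g\le 1$. Proposition~\ref{prop:monotone-concave} already gives that $g$ is nondecreasing and concave. So there are three things to prove: the interior Lipschitz estimate, upper semicontinuity everywhere, and lower semicontinuity on the boundary of the orthant.

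\emph{Interior Lipschitz bound.} Suppose every coordinate of $\boldsymbol\varepsilon$ is at least $h$, and put $\tau:=\lVert\boldsymbol\varepsilon-\boldsymbol\varepsilon'\rVert_\infty$. Then $\boldsymbol\varepsilon'\le\boldsymbol\varepsilon+\tau\mathbf 1\le(1+\tau/h)\boldsymbol\varepsilon$ coordinatewise. For each path $P$, using $c_P\ge0$,
\[
 c_P+\boldsymbol\beta_P\cdot\boldsymbol\varepsilon'
 \le(1+\tau/h)\bigl(c_P+\boldsymbol\beta_P\cdot\boldsymbol\varepsilon\bigr).
\]
Taking infima gives $g(\boldsymbol\varepsilon')\le(1+\tau/h)g(\boldsymbol\varepsilon)\le g(\boldsymbol\varepsilon)+\tau/h$, because $g\le1$. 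Swapping the roles of $\boldsymbol\varepsilon$ and $\boldsymbol\varepsilon'$ yields \eqref{eq:interior-lipschitz}. The only inputs are nonnegativity of $c_P$ and the diameter bound.

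\emph{Upper semicontinuity.} An infimum of continuous affine functions is upper semicontinuous on the whole orthant. Concretely, fix $\eta>0$ and choose one path $P$ with cost within $\eta$ of $g(\boldsymbol\varepsilon)$. Its affine cost is continuous, so $\limsup_{\boldsymbol\varepsilon'\to\boldsymbol\varepsilon}g(\boldsymbol\varepsilon')\le g(\boldsymbol\varepsilon)+\eta$.

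\emph{Lower semicontinuity at boundary points.} This is the main obstacle. When some coordinates vanish, the scaling trick fails, and $\boldsymbol\beta_P$ may be unbounded over paths. Monotonicity handles only approach from above; we need $\liminf_{\boldsymbol\varepsilon'\to\boldsymbol\varepsilon}g(\boldsymbol\varepsilon')\ge g(\boldsymbol\varepsilon)$ for arbitrary approach. I would use concavity along segments. Given $\boldsymbol\varepsilon'$ near $\boldsymbol\varepsilon$, set $\mathbf m:=\min(\boldsymbol\varepsilon,\boldsymbol\varepsilon')$ coordinatewise, so $g(\boldsymbol\varepsilon')\ge g(\mathbf m)$ by monotonicity. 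It therefore suffices to show $g(\mathbf m)\to g(\boldsymbol\varepsilon)$ as $\mathbf m\uparrow\boldsymbol\varepsilon$ within the box $[\mathbf 0,\boldsymbol\varepsilon]$. Write $\mathbf m\ge(1-\lambda)\boldsymbol\varepsilon$, where $\lambda:=\max_{j:\varepsilon_j>0}(\varepsilon_j-m_j)/\varepsilon_j\to0$; coordinates with $\varepsilon_j=0$ force $m_j=0$ and impose no constraint. Concavity on the segment from $\mathbf 0$ to $\boldsymbol\varepsilon$, together with $g(\mathbf 0)\ge0$, gives
\[
 g(\mathbf m)\ge g\bigl((1-\lambda)\boldsymbol\varepsilon\bigr)\ge(1-\lambda)g(\boldsymbol\varepsilon)+\lambda g(\mathbf 0)\ge(1-\lambda)g(\boldsymbol\varepsilon)\ge g(\boldsymbol\varepsilon)-\lambda.
\]
This gives lower semicontinuity, and with the previous step, continuity at every point. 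The statement about decreasing sequences then follows directly, and it also follows from upper semicontinuity combined with monotonicity. The step I would double-check is that the argument never needs continuity of $g$ at $\mathbf 0$ itself. It does not: only the value $g(\mathbf 0)\ge0$ enters, so the argument also covers the point $\boldsymbol\varepsilon=\mathbf 0$, where $\lambda$ is taken to be $0$.
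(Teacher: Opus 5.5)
Your proof is correct and follows the paper's argument. Upper semicontinuity comes from the infimum of affine path costs. Lower semicontinuity comes from $\boldsymbol\varepsilon'\geq(1-\lambda)\boldsymbol\varepsilon$ with $\lambda\to0$, combined with monotonicity, concavity on the segment from $\mathbf 0$ to $\boldsymbol\varepsilon$, and $g(\mathbf 0)\geq0$; the paper's $\theta$ is your $1-\lambda$, and your intermediate $\mathbf m$ is harmless but unnecessary.

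The only variation is in the interior Lipschitz estimate. You scale the budget, $\boldsymbol\varepsilon'\leq(1+\tau/h)\boldsymbol\varepsilon$, whereas the paper bounds $\sum_j\beta_{P,j}\leq(1+\xi)/h$ for a near-optimal path. Your version uses the same ingredients and gives the slightly sharper relative bound $g(\boldsymbol\varepsilon')\leq(1+\tau/h)\,g(\boldsymbol\varepsilon)$ without an approximation parameter.
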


\begin{proof}
As an infimum of continuous affine functions,
$\delta_{\boldsymbol\varepsilon}(s,t)$ is upper
semicontinuous. Let
$\boldsymbol\varepsilon^{(n)}\to\boldsymbol\varepsilon$
and fix $0<\theta<1$. Because the number of coordinates is
finite, eventually
$\boldsymbol\varepsilon^{(n)}\geq
 \theta\boldsymbol\varepsilon$ coordinatewise. By
monotonicity and concavity,
\begin{align*}
 \liminf_{n\to\infty}
 \delta_{\boldsymbol\varepsilon^{(n)}}(s,t)
 &\geq\delta_{\theta\boldsymbol\varepsilon}(s,t)\\
 &\geq\theta\delta_{\boldsymbol\varepsilon}(s,t)
     +(1-\theta)\delta_{\boldsymbol 0}(s,t)\\
 &\geq\theta\delta_{\boldsymbol\varepsilon}(s,t).
\end{align*}
Letting $\theta\uparrow1$ proves lower semicontinuity,
including at boundary points. This, together with upper
semicontinuity, proves continuity.

For the Lipschitz estimate, select a path $P$ with
$\operatorname{cost}_{\boldsymbol\varepsilon}(P)
\leq\delta_{\boldsymbol\varepsilon}(s,t)+\xi\leq1+\xi$.
Since all coordinates of $\boldsymbol\varepsilon$ are at
least $h$ and $\boldsymbol\beta_P\geq0$,
\[
 h\sum_j\beta_{P,j}\leq
 \boldsymbol\beta_P\cdot\boldsymbol\varepsilon
 \leq 1+\xi.
\]
Evaluating the same path at $\boldsymbol\varepsilon'$ gives
\[
 \delta_{\boldsymbol\varepsilon'}(s,t)
 \leq\delta_{\boldsymbol\varepsilon}(s,t)+\xi
     +\frac{1+\xi}{h}
         \lVert\boldsymbol\varepsilon'-
                        \boldsymbol\varepsilon\rVert_\infty.
\]
Let $\xi\downarrow0$ and interchange the two budgets to
obtain \eqref{eq:interior-lipschitz}.
\end{proof}

\subsection{Obstruction certificates}

A finite path $P$ from $u_i$ to $v_i$ satisfying
\[
 c_P+\boldsymbol\beta_P\cdot\boldsymbol\varepsilon<r_i
\]
is an explicit certificate that no admissible model at that
budget can preserve $d(u_i,v_i)\geq r_i$. The converse holds
whenever the preservation condition actually fails: the
strict inequality
$\delta_{\boldsymbol\varepsilon}(u_i,v_i)<r_i$ means that
some finite path attains a cost below $r_i$.

\begin{example}\label{ex:one-ground-bound}
Let $X=\{p,q\}$ with $d_X(p,q)=1$, no operations, and a
ground constraint $d(p,q)\leq\gamma$. The free distance is
\[
 \delta_\gamma(p,q)=\min\{1,\gamma\}.
\]
Preserving $d(p,q)\geq r$ for $0<r\leq1$ is possible exactly
when $\gamma\geq r$. If $\gamma<r$, the single ground edge
is already an obstruction certificate.
\end{example}

\begin{proposition}[Fixed-resolution finite-fragment principle]
\label{prop:finite-fragments}
Fix the signature, coordinate weights, generator bounds, and a
family of protected lower bounds. Suppose a possibly infinite
collection of designated ground law instances is imposed, and
every finite subcollection admits a complete admissible model
preserving all the fixed protected bounds. Then the full
collection admits such a model.
\end{proposition}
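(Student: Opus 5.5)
We argue through the path distance and the preservation criterion, Proposition~\ref{prop:preservation-criterion}. Let $\mathcal G$ be the full (possibly infinite) collection of designated ground law instances. For a subcollection $\mathcal F\subseteq\mathcal G$, write $\delta^{\mathcal F}$ for the path pseudometric of the presentation in which only the ground instances in $\mathcal F$ are imposed. The signature, weights, generator bounds and universal laws are kept unchanged. By Proposition~\ref{prop:preservation-criterion}, applied to the presentation with ground constraints (legitimate by Theorem~\ref{thm:free-realization}), a model preserving the protected bounds exists for $\mathcal F$ exactly when $\delta^{\mathcal F}(u_i,v_i)\geq r_i$ for every $i\in I$. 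The statement therefore reduces to showing: if $\delta^{\mathcal G}(u_i,v_i)<r_i$ for some $i$, then $\delta^{\mathcal F}(u_i,v_i)<r_i$ for some finite $\mathcal F\subseteq\mathcal G$. The conclusion then follows by contraposition, and the witnessing model is the free realization $F(X)$ for the full collection.

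The key step is a finiteness observation about paths, analogous to the obstruction certificates of Theorem~\ref{thm:convex-feasibility}. Suppose $\delta^{\mathcal G}(u_i,v_i)<r_i$. By the definition \eqref{eq:path-distance} of the infimum, some finite path $P$ from $u_i$ to $v_i$ in the term graph for $\mathcal G$ has cost strictly below $r_i$. Each edge of $P$ is of one of the following kinds:
\begin{itemize}
\item a generator edge,
\item a cost-one edge,
\item an instance of a universal law,
\item a ground edge coming from one member of $\mathcal G$,
\item a contextual image $C[u]\leftrightarrow C[v]$ of one of the above, at cost $w(C)c$.
\end{itemize}
Every edge is generated by finitely many elementary edges, and if rule (4) is applied repeatedly, a finite chain still traces back to a single elementary edge. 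So only finitely many ground instances are used by $P$. Let $\mathcal F$ be the set of these instances. Every edge of $P$, with the same cost, is also present in the graph for $\mathcal F$. Hence $\delta^{\mathcal F}(u_i,v_i)\leq\operatorname{cost}(P)<r_i$, which contradicts the hypothesis on finite subcollections.

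The main point needing care is the bookkeeping that the term graphs are monotone in the set of ground constraints. Enlarging $\mathcal F$ only adds edges and never changes the costs of existing ones, so $\delta^{\mathcal G}\leq\delta^{\mathcal F}$. We also need that each edge depends on exactly one ground instance. Both facts follow directly from the edge rules (1)–(4).

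The final step checks that the admissible model for $\mathcal G$ given by Proposition~\ref{prop:preservation-criterion} is complete, has diameter at most one, and satisfies all ground constraints. These properties come directly from Theorems~\ref{thm:largest-pseudometric} and~\ref{thm:free-realization}. Unlike a general compactness theorem, no ultraproducts are needed: the protected thresholds $r_i$ are fixed, and a failure at a fixed resolution is witnessed by a single strict inequality, which a single finite path certifies.
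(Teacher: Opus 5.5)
Your proposal is correct and follows the paper's proof. Both arguments use Proposition~\ref{prop:preservation-criterion} to reduce infeasibility to a single finite path of cost below some $r_i$, and then note that this path uses only finitely many designated ground instances, which by themselves already refute the fixed bound. Your extra bookkeeping (monotonicity of the term graph in the ground constraints, and tracing iterated contextual edges back to one elementary edge, strictly ``at most one'' ground instance per edge) spells out what the paper's short argument leaves implicit.
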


\begin{proof}
If the full collection were infeasible, the preceding finite-path
criterion would give a path violating one protected bound.
Only finitely many of the designated law instances occur among
the edges of that path, together with their contextual images.
These finitely many instances would already make the chosen
protected bound impossible, contrary to the hypothesis.
\end{proof}

The qualification ``fixed protected bounds'' is substantive.
The conclusion does not follow if one merely assumes that every
finite fragment admits \emph{some} non-singleton model, with no
uniform positive separation. Section~\ref{sec:stability}
provides an explicit illustration. This fixed-threshold statement
should be distinguished from compactness of a quantitative equational
theory in the proof-theoretic sense
\cite{Mio2025Compact,MioRiba2026}.

% ============================================================
% 4. ASSOCIATIVITY OBSTRUCTIONS
% ============================================================

\section{Associativity obstructions in metric averaging}
\label{sec:obstructions}

We next study one structural specification in which exact laws
impose a particularly rigid geometric obstruction. Averaging
operations on normed convex sets motivate the example, but we
do not assume that the underlying space is linear or convex.
Classical barycentric calculus uses a family of weighted
operations and its own parameter-dependent associativity laws
\cite{Stone1949,Neumann1970}. Idempotent binary operations with
separate metric side inequalities have also been investigated in
\cite{Navascues2022}. Here we deliberately ask about
ordinary associativity for a \emph{single fixed} binary
operation; the two notions should not be conflated.

\subsection{Weighted averaging operations}

Let $(M,d)$ be a nonempty metric space. Fix
\begin{equation}\label{eq:weights}
 a,b>0,\qquad a+b=1,
\end{equation}
and let $m:M\times M\to M$ satisfy
\begin{equation}\label{eq:avg-idempotence}
 m(x,x)=x
\end{equation}
and
\begin{equation}\label{eq:avg-lipschitz}
 d\bigl(m(x,y),m(x',y')\bigr)
 \leq a\,d(x,x')+b\,d(y,y').
\end{equation}
No commutativity or completeness hypothesis is imposed in this
section. Define the \emph{uniform associativity defect}
\begin{equation}\label{eq:associativity-defect}
 A(m):=\sup_{x,y,z\in M}
 d\bigl(m(m(x,y),z),m(x,m(y,z))\bigr).
\end{equation}
Thus $A(m)=0$ if and only if $m$ is associative.

\begin{lemma}[Forced division of distance]
\label{lem:division-of-distance}
For $p=m(x,y)$,
\begin{equation}\label{eq:division-of-distance}
 d(x,p)=b\,d(x,y),\qquad
 d(p,y)=a\,d(x,y).
\end{equation}
\end{lemma}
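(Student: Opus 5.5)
Using idempotence, we rewrite the endpoints as values of $m$ and apply the Lipschitz condition \eqref{eq:avg-lipschitz} coordinatewise. This gives upper bounds on the two distances; the triangle inequality combined with $a+b=1$ then forces both bounds to be equalities.

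\emph{First upper bound.} By \eqref{eq:avg-idempotence}, $x=m(x,x)$. Comparing $m(x,x)$ with $m(x,y)$ via \eqref{eq:avg-lipschitz} gives
\[
 d(x,p)=d\bigl(m(x,x),m(x,y)\bigr)\leq a\,d(x,x)+b\,d(x,y)=b\,d(x,y).
\]

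\emph{Second upper bound.} Likewise $y=m(y,y)$, so
\[
 d(p,y)=d\bigl(m(x,y),m(y,y)\bigr)\leq a\,d(x,y)+b\,d(y,y)=a\,d(x,y).
\]

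\emph{Forcing equality.} Write $D_{xy}=d(x,y)$. The triangle inequality and $a+b=1$ give
\[
 D_{xy}\leq d(x,p)+d(p,y)\leq (a+b)D_{xy}=D_{xy}.
\]
So every inequality in this chain is an equality. In particular $d(x,p)+d(p,y)=bD_{xy}+aD_{xy}$, with each summand bounded above by its counterpart. If either bound were strict, the sum would fall strictly below $D_{xy}$, a contradiction. Hence $d(x,p)=b\,D_{xy}$ and $d(p,y)=a\,D_{xy}$, which is \eqref{eq:division-of-distance}.

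There is no real obstacle. The only point needing care is the final step: an equality in a sum of two upper bounds forces equality in each term, and this uses that both quantities are finite. That holds because the metrics are finite-valued, as assumed throughout.
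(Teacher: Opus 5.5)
Your proposal is correct and follows essentially the same route as the paper's proof: idempotence plus the coordinatewise Lipschitz bound give $d(x,p)\leq b\,d(x,y)$ and $d(p,y)\leq a\,d(x,y)$, and the triangle inequality with $a+b=1$ forces equality in each. Your remark that finiteness of the metric is what lets equality of the sum force equality in each summand is a small point the paper leaves implicit.
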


\begin{proof}
By idempotence and the coordinatewise bounds,
\[
 d(x,p)=d(m(x,x),m(x,y))\leq b\,d(x,y),
\]
\[
 d(p,y)=d(m(x,y),m(y,y))\leq a\,d(x,y).
\]
The triangle inequality gives
\[
 d(x,y)\leq d(x,p)+d(p,y)
 \leq(a+b)d(x,y)=d(x,y).
\]
Equality holds throughout, forcing equality in both
individual estimates.
\end{proof}

In particular, the operation inserts a point dividing the
metric distance between $x$ and $y$ in the fixed ratio $b:a$,
even when $M$ has no vector-space structure.

\subsection{Exact structural incompatibilities}

The obstruction to exact associativity already has a simple
qualitative form.

\begin{proposition}[Associative strict contractions]
\label{prop:strict-assoc-collapse}
Suppose $m$ is idempotent and associative. If it is separately
$b$-Lipschitz in the second variable with $b<1$, then
$m(x,y)=x$ for all $x,y$. Similarly, if it is separately
$a$-Lipschitz in the first variable with $a<1$, then
$m(x,y)=y$ for all $x,y$. Consequently, if both coordinate
constants are strictly below one, $M$ is a singleton.
\end{proposition}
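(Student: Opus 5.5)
The plan is to derive the identity $m(x,y)=x$ directly from associativity, idempotence and the separate contraction in the second variable, without using the triangle-inequality trick of Lemma~\ref{lem:division-of-distance}. Fix $x,y\in M$ and set $p=m(x,y)$.

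The first step is the computation
\[
 m(x,p)=m(x,m(x,y))=m(m(x,x),y)=m(x,y)=p .
\]
It uses associativity in the middle equality and idempotence $m(x,x)=x$ in the next one. Thus $p$ is a fixed point of the map $g_x:=m(x,-)$.

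The second step uses $g_x(x)=m(x,x)=x$, so $x$ is also a fixed point of $g_x$. By the separate $b$-Lipschitz hypothesis in the second variable,
\[
 d(x,p)=d\bigl(g_x(x),g_x(p)\bigr)\leq b\,d(x,p).
\]
Since $b<1$ and the metric is finite-valued, this forces $d(x,p)=0$, i.e.\ $m(x,y)=x$. No completeness is needed: we never invoke Banach's theorem, only the uniqueness half of the contraction argument applied to two known fixed points.

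The first-variable statement is the mirror image. Set $p=m(x,y)$ and compute
\[
 m(p,y)=m(m(x,y),y)=m(x,m(y,y))=m(x,y)=p .
\]
So $p$ and $y$ are both fixed points of $h_y:=m(-,y)$, and $a$-Lipschitz continuity with $a<1$ gives $d(p,y)\leq a\,d(p,y)$. Hence $m(x,y)=y$.

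For the final clause, assume both constants are below one. Then the two conclusions hold simultaneously, so $x=m(x,y)=y$ for all $x,y$, and $M$ is a singleton (it is nonempty by standing convention).

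I expect no genuine obstacle. The only point requiring care is bracketing the associativity step so that an idempotent factor $m(x,x)$ or $m(y,y)$ appears. The first attempt might be to look at $m(p,p)$ or at $m(m(x,y),x)$, but neither produces a fixed-point equation for a one-variable contraction. The correct choice is to nest $p$ on the side opposite to the repeated variable, which yields $g_x(p)=p$ and $h_y(p)=p$.
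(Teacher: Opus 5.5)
Your proposal is correct and follows the paper's proof essentially step for step. Like the paper, you use associativity and idempotence to show $m(x,m(x,y))=m(x,y)$, then apply the second-coordinate contraction between the two fixed points $x$ and $p$ of $m(x,-)$; you treat the first variable symmetrically through $m(m(x,y),y)=m(x,y)$, and combine the two conclusions for the singleton claim.
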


\begin{proof}
Fix $x,y$ and put $p=m(x,y)$. Associativity and idempotence
give
\[
 m(x,p)=m(x,m(x,y))=m(m(x,x),y)=p.
\]
Hence
\[
 d(p,x)=d(m(x,p),m(x,x))\leq b\,d(p,x).
\]
Since $b<1$, $p=x$. For the second assertion, use the
symmetric identity
\[
 m(m(x,y),y)=m(x,m(y,y))=m(x,y)
\]
and the first-coordinate contraction. If both conditions
hold, $x=m(x,y)=y$ for arbitrary $x,y$.
\end{proof}

Thus a nontrivial metric space can admit strictly contractive
idempotent averaging, but not ordinary associativity in
addition. The next theorem quantifies the obstruction.

\subsection{Sharp associativity-defect bounds}

\begin{theorem}[Associativity obstruction]
\label{thm:associativity-obstruction}
Let $m$ satisfy \eqref{eq:weights}--\eqref{eq:avg-lipschitz}
on a metric space $M$ of diameter $D<\infty$. Then
\begin{equation}\label{eq:sharp-associativity-bounds}
 abD\leq A(m)\leq2abD.
\end{equation}
The lower bound is attained for every choice of $a,b$ by weighted
averaging on a bounded convex subset of a normed space. The upper
coefficient is attained for $a=b=1/2$ by geodesic midpoint
interpolation on a metric tripod with three equal arms.
\end{theorem}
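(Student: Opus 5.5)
The lower bound comes from a degenerate triple together with Lemma~\ref{lem:division-of-distance}. The upper bound comes from comparing both bracketings with one intermediate term. Sharpness is checked by computing the two explicit examples.

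\emph{Lower bound.} Take the triple $(x,x,z)$ for arbitrary $x,z\in M$. By idempotence, $m(m(x,x),z)=m(x,z)=:p$, while the other bracketing is $m(x,m(x,z))=m(x,p)=:q$. Applying Lemma~\ref{lem:division-of-distance} to the pair $(x,z)$ gives $d(x,p)=b\,d(x,z)$. Applying it again to the pair $(x,p)$ gives
\[
 d(p,q)=a\,d(x,p)=ab\,d(x,z).
\]
Hence $A(m)\geq ab\,d(x,z)$. Taking the supremum over $x,z$ yields $A(m)\geq abD$.

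\emph{Upper bound.} For arbitrary $x,y,z$, introduce the intermediate point $w:=m(m(x,y),m(y,z))$.
\begin{itemize}
\item The left bracketing $m(m(x,y),z)$ and $w$ agree in the first coordinate. By \eqref{eq:avg-lipschitz} and Lemma~\ref{lem:division-of-distance},
\[
 d\bigl(m(m(x,y),z),w\bigr)\leq b\,d\bigl(z,m(y,z)\bigr)=ab\,d(y,z).
\]
\item Similarly, $w$ and the right bracketing $m(x,m(y,z))$ agree in the second coordinate, so
\[
 d\bigl(w,m(x,m(y,z))\bigr)\leq a\,d\bigl(m(x,y),x\bigr)=ab\,d(x,y).
\]
\end{itemize}
The triangle inequality bounds the defect at $(x,y,z)$ by $ab\,(d(x,y)+d(y,z))\leq 2abD$. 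Taking the supremum gives $A(m)\leq 2abD$.

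\emph{Attainment of the lower bound.} On a bounded convex set $K$ in a normed space, let $m(x,y)=ax+by$. Idempotence is clear, and \eqref{eq:avg-lipschitz} follows from the triangle inequality for the norm. Expanding the two bracketings gives
\[
 m(m(x,y),z)-m(x,m(y,z))=(a^2-a)x+(b-b^2)z=ab(z-x).
\]
So the defect at $(x,y,z)$ is $ab\,\lVert z-x\rVert$, and its supremum is exactly $abD$.

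\emph{Attainment of the upper coefficient.} Take the tripod with center $o$ and three arms of length $L$ ending at $x,y,z$, so $D=2L$. Let $m$ be the geodesic midpoint map, which is well defined because the tripod is an $\mathbb R$-tree.
\begin{itemize}
\item On the left, $m(x,y)=o$, and $m(o,z)$ is the point on the $z$-arm at distance $L/2$ from $o$.
\item On the right, $m(y,z)=o$, and $m(x,o)$ is the point on the $x$-arm at distance $L/2$ from $o$.
\end{itemize}
Their distance is $L=D/2=2\cdot\tfrac14 D$, which equals $2abD$ when $a=b=1/2$.

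\emph{Main obstacle.} The step needing real care is that the midpoint map on the tripod satisfies \eqref{eq:avg-lipschitz} with $a=b=1/2$. I would derive this from Busemann convexity of $\mathbb R$-trees, which are CAT(0). Busemann convexity gives $d(\mathrm{mid}(x,y),\mathrm{mid}(x',y'))\leq\tfrac12\bigl(d(x,x')+d(y,y')\bigr)$. If citing this is unwanted, the fallback is a direct case check according to which arms the four points lie on. It suffices to move one coordinate at a time: keep $x$ fixed and check that sliding $y$ moves $\mathrm{mid}(x,y)$ by at most half as much.
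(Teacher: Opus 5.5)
Your proof is correct and follows the paper's argument: the lower bound uses the same degenerate triple $(x,x,z)$ with Lemma~\ref{lem:division-of-distance} applied twice, the upper bound uses the same single-intermediate-term triangle inequality, and the two sharpness examples are the same. In the upper bound the paper inserts $m(x,z)$ where you insert $m(m(x,y),m(y,z))$, the opposite corner of the same square, and both split the associator into pieces of size $ab\,d(x,y)$ and $ab\,d(y,z)$. For the tripod, the paper checks the $\tfrac12$-Lipschitz property by a direct case computation from explicit midpoint formulas, whereas your appeal to Busemann convexity of CAT(0) spaces (which $\mathbb{R}$-trees are) is a valid and shorter justification.
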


\begin{proof}
Fix $x,y$ and set $p=m(x,y)$. Evaluate the associator at
$(x,x,y)$:
\begin{align*}
 &d\bigl(m(m(x,x),y),m(x,m(x,y))\bigr)\\
 &\qquad=d\bigl(p,m(x,p)\bigr)
       =a\,d(x,p)
       =ab\,d(x,y),
\end{align*}
where the last two equalities follow from
Lemma~\ref{lem:division-of-distance}. Taking the supremum
over $x,y$ gives $A(m)\geq abD$.

For the upper bound, insert the intermediate term $m(x,z)$
and use the separate Lipschitz bounds:
\begin{align*}
 &d\bigl(m(m(x,y),z),m(x,m(y,z))\bigr)\\
 &\quad\leq d\bigl(m(m(x,y),z),m(x,z)\bigr)
             +d\bigl(m(x,z),m(x,m(y,z))\bigr)\\
 &\quad\leq a\,d(m(x,y),x)+b\,d(z,m(y,z))\\
 &\quad=ab\bigl(d(x,y)+d(y,z)\bigr)
       \leq2abD,
\end{align*}
again by Lemma~\ref{lem:division-of-distance}.

For lower sharpness, let $K$ be a bounded convex subset of a
normed space and define $m(x,y)=ax+by$. Then
\[
 m(m(x,y),z)-m(x,m(y,z))=ab(z-x).
\]
Consequently $A(m)=ab\operatorname{diam}K$.

For upper sharpness with $a=b=1/2$, let $M$ consist of
three segments of length one glued at a common endpoint $o$.
Its diameter is $D=2$. Give it the path metric and let $m$
return the midpoint of the unique geodesic joining its inputs.
This map is idempotent and $1/2$-Lipschitz in each coordinate.
For completeness, label the arms by $1,2,3$, write a point
as $(i,r)$ with $0\leq r\leq1$, and identify all $(i,0)$
with $o$. For fixed $z=(k,t)$, its midpoint with $(i,r)$ is
\[
 m((i,r),(k,t))=
 \begin{cases}
 (k,(r+t)/2),&i=k,\\
 (i,(r-t)/2),&i\ne k,\ r\geq t,\\
 (k,(t-r)/2),&i\ne k,\ r\leq t.
 \end{cases}
\]
Distances on one arm are $|r-s|$ and on distinct arms are
$r+s$. Comparing these formulas for two varying points
$x,y$ shows
$d(m(x,z),m(y,z))\leq\tfrac12d(x,y)$: on the same
arm there is equality, and on different arms the midpoint
distance is at most half the sum of their radii. Symmetry
gives the corresponding estimate in the second coordinate.
For the three distinct outer endpoints $x,y,z$ we have
$m(x,y)=m(y,z)=o$. Hence
\[
 d\bigl(m(m(x,y),z),m(x,m(y,z))\bigr)
 =d\bigl(m(o,z),m(x,o)\bigr)=1
 =2abD.
\]
This attains the upper estimate.
\end{proof}

\begin{remark}\label{rem:associativity-scope}
The theorem concerns ordinary associativity of one fixed
operation $m$. It is not an impossibility theorem for
barycentric or convex algebras in general: the usual system
of weighted averages satisfies parameter-dependent
barycentric identities rather than ordinary associativity of
one fixed weighted operation. Also, the two sharpness examples
need not coincide; only the upper example requires equal
weights in the stated construction.
\end{remark}

% ============================================================
% 5. EXACT PRESERVATION
% ============================================================

\section{Exact preservation and distance truncation}
\label{sec:truncation}

The preceding obstruction forces any realization with a small
associativity defect to have small diameter. We now show that
this is the \emph{only} restriction on the distances between
original generators in the corresponding free complete
realization.

\subsection{The realization problem}

Fix $a,b>0$ with $a+b=1$. For $\alpha\geq0$, consider the
quantitative specification consisting of one binary symbol
$m$, the coordinate bounds \eqref{eq:avg-lipschitz}, exact
idempotence \eqref{eq:avg-idempotence}, and the universal
approximate-associativity requirement
\begin{equation}\label{eq:alpha-law}
 d\bigl(m(m(x,y),z),m(x,m(y,z))\bigr)\leq\alpha
 \qquad(x,y,z\in M).
\end{equation}
For a nonempty metric input $(X,d_X)$ of diameter at most
one, write $F_\alpha(X)$ for the free complete realization
and $\eta_\alpha:X\to F_\alpha(X)$ for its canonical map.

By Theorem~\ref{thm:associativity-obstruction}, every
admissible target $M$ satisfies
\[
 ab\operatorname{diam}M\leq A(m)\leq\alpha.
\]
Because $\eta_\alpha$ is nonexpansive, it follows that
\begin{equation}\label{eq:truncation-upper}
 d_{F_\alpha(X)}\bigl(\eta_\alpha(x),\eta_\alpha(y)\bigr)
 \leq\min\left\{d_X(x,y),\frac{\alpha}{ab}\right\}.
\end{equation}
The substantive question is whether additional losses of
generator distance are unavoidable.

\subsection{The exact distance-truncation theorem}

\begin{theorem}[Exact generator-distance truncation]
\label{thm:distance-truncation}
Let $X$ be any nonempty metric space of diameter at most one,
and let $a,b>0$ satisfy $a+b=1$. For every $\alpha\geq0$
and $x,y\in X$,
\begin{equation}\label{eq:exact-truncation}
 \boxed{\quad
 d_{F_\alpha(X)}\bigl(\eta_\alpha(x),\eta_\alpha(y)\bigr)
 =\min\left\{d_X(x,y),\frac{\alpha}{ab}\right\}.
 \quad}
\end{equation}
\end{theorem}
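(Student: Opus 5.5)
The upper bound \eqref{eq:truncation-upper} has already been established, so the plan is to prove only the reverse inequality. By Corollary~\ref{cor:semantic-distances}, it suffices to exhibit one admissible complete realization $(M,f)$ at tolerance $\alpha$ in which
\[
 d_M(f(x),f(y))=\min\{d_X(x,y),\alpha/(ab)\}.
\]
Then the free distance, being the supremum over all such models, is at least this value. The lower-sharpness example in Theorem~\ref{thm:associativity-obstruction} suggests the model: weighted averaging $m(u,v)=au+bv$ on a closed convex subset $K$ of a Banach space. There the associativity defect is exactly $ab\operatorname{diam}K$, so any $K$ with $\operatorname{diam}K\leq\alpha/(ab)$ obeys \eqref{eq:alpha-law}.

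Concretely, set $c=\min\{1,\alpha/(ab)\}$ and $d'(x,y)=\min\{d_X(x,y),c\}$. Truncating a metric at a constant yields a metric, and $d'=\min\{d_X,\alpha/(ab)\}$ because $d_X\leq1$. Next, embed $(X,d')$ isometrically into $\ell^\infty(X)$ by the Fr\'echet--Kuratowski map $x\mapsto\bigl(d'(x,z)-d'(x_0,z)\bigr)_{z\in X}$ for a fixed base point $x_0$. The coordinates are bounded by $c$, and the sup-norm distance equals $d'$ by the triangle inequality, with the coordinate $z=y$ attaining it. Let $K$ be the closed convex hull of the image and let $f$ be the embedding.

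The verifications are then routine.
\begin{itemize}
\item $K$ is complete, being closed in a Banach space.
\item $\operatorname{diam}K$ equals the diameter of the image, since the norm is convex and continuous. Hence $\operatorname{diam}K\leq c\leq1$.
\item $m$ maps $K$ into $K$ by convexity, is idempotent, and satisfies \eqref{eq:avg-lipschitz} by the triangle inequality for the norm.
\item The identity $m(m(u,v),w)-m(u,m(v,w))=ab(w-u)$ gives defect at most $ab\,c\leq\alpha$.
\item $f$ is nonexpansive since $d'\leq d_X$, so it is an admissible generator assignment.
\end{itemize}
Consequently $d_{F_\alpha(X)}(\eta_\alpha(x),\eta_\alpha(y))\geq\|f(x)-f(y)\|=d'(x,y)$, which combined with \eqref{eq:truncation-upper} gives \eqref{eq:exact-truncation}.

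The only step needing genuine care is ensuring that a single linear ambient space realizes the truncated metric isometrically while keeping the convex hull's diameter equal to the truncation level. The $\ell^\infty$ embedding handles this uniformly for arbitrary, possibly infinite, $X$, and the diameter claim reduces to the sup of a convex function over a convex hull being attained on the generating set. The degenerate case $\alpha=0$ needs no separate argument: then $c=0$, $K$ is a point, and both sides vanish.
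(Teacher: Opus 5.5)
Your proposal is correct and follows essentially the same route as the paper. Both truncate the metric at $\alpha/(ab)$, embed by the Kuratowski map into $\ell^\infty(X)$, use the closed convex hull with weighted averaging as an admissible witness of defect at most $\alpha$, and transfer the generator distance back to $F_\alpha(X)$; you do the transfer via Corollary~\ref{cor:semantic-distances}, the paper via the universal homomorphism $\overline\iota$. The remaining differences are cosmetic: your base point is unnecessary since $d'$ is bounded, and your uniform treatment of $\alpha=0$ works because \eqref{eq:truncation-upper} already gives zero there (although $d'$ is then only a pseudometric), whereas the paper handles that case separately by noting that $F_0(X)$ is a singleton.
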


\begin{proof}
The upper estimate is \eqref{eq:truncation-upper}. We prove
the converse by constructing one admissible complete algebra
in which the prescribed generator distance is attained.

First assume $\alpha>0$ and put $L=\alpha/(ab)>0$.
Define a truncated metric on the underlying set of $X$ by
\begin{equation}\label{eq:truncated-metric}
 d_L(x,y):=\min\{d_X(x,y),L\}.
\end{equation}
Since $L>0$, this separates distinct points. The triangle
inequality follows from
\[
 \min\{r+s,L\}\leq\min\{r,L\}+\min\{s,L\}
 \qquad(r,s\geq0).
\]
Thus $(X,d_L)$ is a metric space, and
$\operatorname{diam}(X,d_L)\leq\min\{1,L\}$.

Consider the Kuratowski distance-function map into the Banach
space $\ell^\infty(X)$:
\begin{equation}\label{eq:kuratowski}
 \iota:X\longrightarrow\ell^\infty(X),
 \qquad \iota(x)(z)=d_L(x,z).
\end{equation}
All functions $\iota(x)$ are bounded. By the reverse triangle
inequality, for every $z\in X$,
\[
 |\iota(x)(z)-\iota(y)(z)|\leq d_L(x,y).
\]
Taking $z=x$ shows that equality is attained in the
supremum norm. Therefore
\begin{equation}\label{eq:kuratowski-isometry}
 \lVert\iota(x)-\iota(y)\rVert_\infty=d_L(x,y).
\end{equation}

Let $K=\overline{\operatorname{conv}}(\iota(X))$ be the
closed convex hull of the embedded generators. It is a
complete metric space. Moreover,
\begin{equation}\label{eq:convex-hull-diameter}
 \operatorname{diam}K\leq\min\{1,L\}.
\end{equation}
Indeed, if $u=\sum_i\lambda_i s_i$ and
$v=\sum_j\mu_j t_j$ are finite convex combinations of
points of $\iota(X)$, then
\[
 u-v=\sum_{i,j}\lambda_i\mu_j(s_i-t_j),
 \qquad
 \lVert u-v\rVert_\infty
 \leq\sum_{i,j}\lambda_i\mu_j
       \lVert s_i-t_j\rVert_\infty
 \leq\operatorname{diam}(X,d_L).
\]
Pass to the closure to obtain \eqref{eq:convex-hull-diameter}.

Define $m:K\times K\to K$ by ordinary weighted averaging,
\[
 m(u,v)=au+bv.
\]
Convexity makes this a well-defined operation. It is
idempotent, satisfies the required coordinatewise Lipschitz
bound, and obeys
\[
 m(m(u,v),w)-m(u,m(v,w))=ab(w-u).
\]
Its associativity defect is consequently
\[
 A(m)=ab\operatorname{diam}K\leq abL=\alpha.
\]
Thus $K$ is a complete admissible realization of the
$\alpha$-specification. The map $\iota$ is nonexpansive
from $(X,d_X)$ because $d_L\leq d_X$. By the free universal
property there is a nonexpansive homomorphism
$\overline\iota:F_\alpha(X)\to K$ extending $\iota$.
For every $x,y\in X$,
\begin{align*}
 d_L(x,y)
 &=\lVert\iota(x)-\iota(y)\rVert_\infty\\
 &=d_K\bigl(\overline\iota(\eta_\alpha(x)),
                 \overline\iota(\eta_\alpha(y))\bigr)\\
 &\leq d_{F_\alpha(X)}\bigl(\eta_\alpha(x),
                                      \eta_\alpha(y)\bigr).
\end{align*}
Together with \eqref{eq:truncation-upper}, this proves
\eqref{eq:exact-truncation} for $\alpha>0$.

If $\alpha=0$, Theorem~\ref{thm:associativity-obstruction}
forces every admissible nonempty target to have diameter zero.
In particular, $F_0(X)$ is a singleton and both sides of
\eqref{eq:exact-truncation} vanish.
\end{proof}

The lower-bound witness is deliberately concrete, but the
construction of $K$ is not intended to identify the full
free algebra with a convex hull. It is a witness showing that
the upper bound on the distances between the \emph{original
generators} is optimal. The free algebra can contain many
additional points represented by algebraic terms and limits.

\subsection{Injectivity and isometric preservation}

\begin{corollary}[Exact preservation thresholds]
\label{cor:preservation-thresholds}
Under the hypotheses of Theorem~\ref{thm:distance-truncation}:
\begin{enumerate}
\item If $\alpha=0$, the free realization is a singleton.
\item If $\alpha>0$, the map $\eta_\alpha$ is injective.
\item The map $\eta_\alpha$ is isometric if and only if
\begin{equation}\label{eq:isometry-threshold}
 \alpha\geq ab\operatorname{diam}X.
\end{equation}
\end{enumerate}
\end{corollary}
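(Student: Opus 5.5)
All three items follow directly from the exact formula \eqref{eq:exact-truncation} of Theorem~\ref{thm:distance-truncation}. I would treat them in order, with only routine bookkeeping about diameters and suprema.

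For item (1), with $\alpha=0$ the formula gives $d(\eta_0(x),\eta_0(y))=0$ for all generators, but this alone says nothing about the rest of the algebra. For the full statement I would use the bound noted before \eqref{eq:truncation-upper}. Every admissible target satisfies $ab\operatorname{diam}M\leq A(m)\leq\alpha=0$, and $F_0(X)$ is admissible by Theorem~\ref{thm:free-realization}. Since $ab>0$, this forces $\operatorname{diam}F_0(X)=0$. Nonemptiness is automatic because $X$ is nonempty, so $F_0(X)$ is a singleton. The case $\alpha=0$ in the proof of Theorem~\ref{thm:distance-truncation} already records this.

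For item (2), take $\alpha>0$ and distinct $x,y\in X$. Then $d_X(x,y)>0$ and $\alpha/(ab)>0$, so the minimum in \eqref{eq:exact-truncation} is strictly positive. Hence $\eta_\alpha(x)\neq\eta_\alpha(y)$.

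For item (3), $\eta_\alpha$ is isometric exactly when $\min\{d_X(x,y),\alpha/(ab)\}=d_X(x,y)$ for all pairs. This is equivalent to $d_X(x,y)\leq\alpha/(ab)$ for all $x,y$, that is, $\operatorname{diam}X\leq\alpha/(ab)$, which is \eqref{eq:isometry-threshold}. The one step needing care is passing from the pointwise condition to the supremum. The pointwise bounds give $\sup\leq\alpha/(ab)$ even when the diameter is not attained, and conversely the diameter bound gives every pointwise bound. The degenerate case $\alpha=0$ fits the same statement: isometry holds iff $\operatorname{diam}X=0$, i.e.\ $X$ is a singleton, which agrees with item (1). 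No step is a genuine obstacle; the only subtlety is handling a non-attained supremum and the $\alpha=0$ edge case, both of which are immediate.
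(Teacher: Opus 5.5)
Your proposal is correct and follows the paper's own argument. Item (1) is the diameter bound $ab\operatorname{diam}M\leq A(m)\leq\alpha$ applied to the admissible free algebra, exactly as in the $\alpha=0$ case of the proof of Theorem~\ref{thm:distance-truncation}, and items (2) and (3) are read off directly from the truncation formula, with your extra care about a non-attained supremum and the $\alpha=0$ edge case being harmless refinements.
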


\begin{proof}
The first claim was proved in the theorem. If $\alpha>0$,
then
\[
  \min\{d_X(x,y),\alpha/(ab)\}>0
\]
for distinct points, proving injectivity. Equality of all original distances
holds exactly when
$d_X(x,y)\leq\alpha/(ab)$ for every pair, which is
equivalent to \eqref{eq:isometry-threshold}.
\end{proof}

\begin{corollary}[Protected generator distances]
\label{cor:optimal-protected-budget}
Let $\{(x_i,y_i,r_i)\}_{i\in I}$ be a family with
$x_i,y_i\in X$ and $0\leq r_i\leq d_X(x_i,y_i)$.
There exists a complete admissible realization of the
$\alpha$-specification with a nonexpansive generator map $f$
satisfying
\[
 d_M(f(x_i),f(y_i))\geq r_i\qquad(i\in I)
\]
if and only if
\begin{equation}\label{eq:protected-alpha-threshold}
 \alpha\geq ab\sup_{i\in I}r_i,
\end{equation}
where the supremum of the empty family is $0$.
\end{corollary}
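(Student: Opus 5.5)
The plan is to reduce both directions to Proposition~\ref{prop:preservation-criterion} combined with Theorem~\ref{thm:distance-truncation}. By Proposition~\ref{prop:preservation-criterion}, applied to the protected generator pairs $(x_i,y_i)$ with thresholds $r_i$, a complete admissible realization with a nonexpansive generator map satisfying all the lower bounds exists if and only if $\delta_\alpha(x_i,y_i)\geq r_i$ for every $i\in I$. By Theorem~\ref{thm:distance-truncation}, $\delta_\alpha(x_i,y_i)=\min\{d_X(x_i,y_i),\alpha/(ab)\}$. The problem is therefore purely arithmetic: show that $\min\{d_X(x_i,y_i),\alpha/(ab)\}\geq r_i$ for all $i$ if and only if \eqref{eq:protected-alpha-threshold} holds.

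\emph{Sufficiency.} Assume $\alpha\geq ab\sup_i r_i$. Then $\alpha/(ab)\geq r_i$ for each $i$. The hypothesis also gives $d_X(x_i,y_i)\geq r_i$, so the minimum is at least $r_i$. The free realization $F_\alpha(X)$ with $\eta_\alpha$ therefore witnesses every bound simultaneously.

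\emph{Necessity.} Assume some realization $(M,f)$ satisfies all the bounds. Then for each $i$ we have $r_i\leq\delta_\alpha(x_i,y_i)\leq\alpha/(ab)$, so $ab\,r_i\leq\alpha$. Taking the supremum over $i$ gives \eqref{eq:protected-alpha-threshold}. A more direct route avoids the free algebra: every admissible target has $ab\operatorname{diam}M\leq\alpha$ by Theorem~\ref{thm:associativity-obstruction}, so $r_i\leq d_M(f(x_i),f(y_i))\leq\alpha/(ab)$.

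\emph{Case $\alpha=0$.} Theorem~\ref{thm:distance-truncation} still applies, since both sides vanish. The condition then becomes $\sup_i r_i\leq 0$, meaning all $r_i=0$, and any admissible singleton model works. The empty family is also consistent: the supremum is $0$ and the condition holds for every $\alpha\geq0$.

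There is no real obstacle here. The only points requiring care are that the supremum may not be attained (harmless, since each inequality is checked individually) and the degenerate cases just described.
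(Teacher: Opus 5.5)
Your proposal is correct and follows the paper's proof: sufficiency uses the single free realization $F_\alpha(X)$ via Theorem~\ref{thm:distance-truncation}, and your ``more direct'' necessity route through $ab\operatorname{diam}M\leq\alpha$ from Theorem~\ref{thm:associativity-obstruction} is exactly the paper's argument. Your primary necessity route through Proposition~\ref{prop:preservation-criterion} is only a repackaging of it, since the bound $\delta_\alpha\leq\alpha/(ab)$ was itself derived from that diameter estimate.
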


\begin{proof}
Every admissible realization has diameter at most
$\alpha/(ab)$ by Theorem~\ref{thm:associativity-obstruction},
which makes \eqref{eq:protected-alpha-threshold} necessary.
Conversely, if this inequality holds, then
$r_i\leq\min\{d_X(x_i,y_i),\alpha/(ab)\}$ for every $i$.
Theorem~\ref{thm:distance-truncation} shows that the single
free realization satisfies all of these bounds simultaneously.
\end{proof}

The corollary makes the connection with the general feasibility
region in Section~\ref{sec:preservation} explicit: for this
particular algebraic specification, even an arbitrary family of
protected generator distances has a one-dimensional, exact
error-budget threshold.

\begin{example}[Midpoint weights]\label{ex:midpoint-threshold}
Let $X=\{p,q\}$ with $d_X(p,q)=0.8$ and take $a=b=1/2$.
Then
\[
 d_{F_\alpha(X)}\bigl(\eta_\alpha(p),\eta_\alpha(q)\bigr)
 =\min\{0.8,4\alpha\}.
\]
For $\alpha=0$ the generators coincide. At $\alpha=0.05$
they are separated by distance $0.2$; at $\alpha=0.2$ the
original distance $0.8$ is completely preserved. No further
increase in $\alpha$ changes the distance between these
generators.
\end{example}

The result displays two different transitions. Any positive
associativity tolerance restores distinctness of all original
generators, while exact preservation of their metric requires
reaching a definite diameter-dependent threshold. These are
different preservation requirements even though both may be
informally called ``faithful realization.''

% ============================================================
% 6. STABILITY
% ============================================================

\section{Stability and multiple relaxations}
\label{sec:stability}

The exact truncation formula depends Lipschitz-continuously
on its single tolerance parameter. For general quantitative
presentations, Theorem~\ref{thm:tolerance-continuity}
establishes continuity but does not give a uniform Lipschitz
bound at zero. In this section we first relax a second law in
the averaging specification and then show by example that
Lipschitz stability can indeed fail at an exact specification.

\subsection{Approximate idempotence}

Let $m:M^2\to M$ satisfy the same coordinatewise Lipschitz
estimate as before, but initially assume only
\begin{equation}\label{eq:approx-weights}
 0\leq a,b<1,
\end{equation}
without requiring $a+b=1$. Replace idempotence by the
uniform defect bound
\begin{equation}\label{eq:approx-idempotence}
 d(m(x,x),x)\leq\epsilon
 \qquad(x\in M),
\end{equation}
and require the uniform associativity defect to satisfy
\begin{equation}\label{eq:approx-associativity}
 A(m)\leq\alpha.
\end{equation}
The two error parameters express different ways of relaxing
an overconstrained specification. We derive a necessary bound
on the diameter of any such realization.

\subsection{Quantitative stability bounds}

\begin{proposition}[Diameter obstruction with two defects]
\label{prop:robust-diameter}
Under \eqref{eq:approx-weights}--\eqref{eq:approx-associativity},
for every $x,y\in M$,
\begin{align}
 (1-b)d(m(x,y),x)&\leq\alpha+(1+a)\epsilon,
 \label{eq:robust-left}\\
 (1-a)d(m(x,y),y)&\leq\alpha+(1+b)\epsilon.
 \label{eq:robust-right}
\end{align}
Consequently, writing $D=\operatorname{diam}M$,
\begin{equation}\label{eq:robust-diameter-general}
 D\leq
 \left(\frac{1}{1-b}+\frac{1}{1-a}\right)\alpha
 +\left(\frac{1+a}{1-b}+\frac{1+b}{1-a}\right)\epsilon.
\end{equation}
In the normalized case $a,b>0$ and $a+b=1$ this reduces to
\begin{equation}\label{eq:robust-diameter-normalized}
 \boxed{\quad abD\leq\alpha+(1+2ab)\epsilon.\quad}
\end{equation}
\end{proposition}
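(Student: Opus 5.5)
The approach is to adapt the argument of Proposition~\ref{prop:strict-assoc-collapse}, replacing each exact use of idempotence or associativity by a triangle-inequality step that pays the corresponding defect. Fix $x,y\in M$ and put $p=m(x,y)$. For \eqref{eq:robust-left}, I evaluate the associator at $(x,x,y)$. By \eqref{eq:approx-associativity},
\[
 d\bigl(m(m(x,x),y),\,m(x,p)\bigr)\leq\alpha .
\]
Since $d(m(x,x),x)\leq\epsilon$, the first-coordinate Lipschitz bound gives $d\bigl(m(m(x,x),y),p\bigr)\leq a\epsilon$. Hence
\[
 d\bigl(p,m(x,p)\bigr)\leq\alpha+a\epsilon .
\]

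Next I compare $p$ with $x$ through the chain $p\to m(x,p)\to m(x,x)\to x$:
\[
 d(p,x)\leq d\bigl(p,m(x,p)\bigr)+d\bigl(m(x,p),m(x,x)\bigr)+d\bigl(m(x,x),x\bigr)
 \leq(\alpha+a\epsilon)+b\,d(p,x)+\epsilon .
\]
The middle term uses the second-coordinate bound with constant $b$. Because every term is finite and $b<1$, the $b\,d(p,x)$ term can be absorbed on the left, which yields \eqref{eq:robust-left}.

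The bound \eqref{eq:robust-right} follows symmetrically. I use the associator at $(x,y,y)$, namely $d\bigl(m(p,y),\,m(x,m(y,y))\bigr)\leq\alpha$. The second-coordinate bound gives $d\bigl(m(x,m(y,y)),p\bigr)\leq b\epsilon$. I then run the chain $p\to m(p,y)\to m(y,y)\to y$, where the middle step costs $a\,d(p,y)$, and absorb that term using $a<1$.

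For the diameter bound I write $d(x,y)\leq d(x,p)+d(p,y)$, divide the two estimates by $1-b$ and $1-a$ respectively, and take the supremum over $x,y$. This gives \eqref{eq:robust-diameter-general}.

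In the normalized case $1-b=a$ and $1-a=b$. The coefficient of $\alpha$ becomes $1/a+1/b=1/(ab)$. The coefficient of $\epsilon$ becomes
\[
 \frac{(1+a)b+(1+b)a}{ab}=\frac{1+2ab}{ab}.
\]
Multiplying through by $ab$ gives \eqref{eq:robust-diameter-normalized}.

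The only real care point is selecting the associator instances $(x,x,y)$ and $(x,y,y)$ so that, exactly as in the exact case, the defect feeds back into the same distance $d(p,x)$ or $d(p,y)$ with a contraction factor below one. Once those instances are chosen, the remaining steps are bookkeeping.
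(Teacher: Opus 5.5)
Your proof is correct and matches the paper's argument step for step. You use the associator at $(x,x,y)$ and $(x,y,y)$ and the three-step triangle chain through $m(x,p)$ and $m(x,x)$ (resp.\ $m(p,y)$ and $m(y,y)$), absorbing $b\,d(p,x)$ (resp.\ $a\,d(p,y)$), then split $d(x,y)$ at $p$ and simplify in the normalized case; you also write out the symmetric case that the paper leaves as ``analogous.''
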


\begin{proof}
Fix $x,y$ and put $p=m(x,y)$. The associativity bound at
$(x,x,y)$ gives
\[
 d\bigl(m(m(x,x),y),m(x,p)\bigr)\leq\alpha.
\]
By the first-coordinate Lipschitz estimate and approximate
idempotence,
\[
 d\bigl(m(m(x,x),y),p\bigr)
 \leq a\,d(m(x,x),x)\leq a\epsilon.
\]
Hence
\[
 d(p,m(x,p))\leq\alpha+a\epsilon.
\]
Using the triangle inequality and the second-coordinate
Lipschitz estimate,
\begin{align*}
 d(p,x)&\leq d(p,m(x,p))
        +d(m(x,p),m(x,x))+d(m(x,x),x)\\
 &\leq\alpha+a\epsilon+b\,d(p,x)+\epsilon,
\end{align*}
which proves \eqref{eq:robust-left}. An analogous argument
using associativity at $(x,y,y)$ yields
\eqref{eq:robust-right}. Now
\[
 d(x,y)\leq d(x,m(x,y))+d(m(x,y),y).
\]
Insert the preceding estimates and take the supremum over
$x,y$ to obtain \eqref{eq:robust-diameter-general}. When
$a+b=1$, substitute $1-b=a$, $1-a=b$, and simplify to get
\eqref{eq:robust-diameter-normalized}.
\end{proof}

\begin{remark}[A necessary frontier, not a classification]
\label{rem:two-parameter-frontier}
When $a=b=1/2$ and $D=1$, the normalized bound becomes
\[
 \frac14\leq\alpha+\frac32\epsilon.
\]
At $\epsilon=0$, Theorem~\ref{thm:associativity-obstruction}
shows that the coefficient of $\alpha$ is sharp. At a
nonzero idempotence budget, however, the displayed inequality
is only necessary; this investigation does not establish that
all budget pairs satisfying it are realizable on diameter-one
spaces. For comparison, weighted averaging on $[0,1]$
provides a diameter-one realization with
$(\epsilon,\alpha)=(0,ab)$, while the constant operation
$m(x,y)=1/2$ on $[0,1]$ is associative and satisfies the
separate Lipschitz bounds, with idempotence defect at most
$1/2$. Determining the full two-parameter feasible region
requires additional work.
\end{remark}

\subsection{Boundary instability}

We now turn to a different presentation in order to isolate
one feature that the averaging example does not exhibit:
continuity of a free distance at tolerance zero does not
imply Lipschitz continuity there. The example has no
operation symbols and uses countably many ground metric
constraints, rather than a finite presentation of universal
algebraic identities. Thus the effect does not require nonlinear
algebraic operations or a complicated signature.

Fix $0<q<1$ and take the generator set
\begin{equation}\label{eq:ladder-generators}
 X=\{a_n,b_n:n\geq0\}.
\end{equation}
Put $d_X(a_n,b_n)=q^n$ and give every other pair of distinct
generators distance one. This defines a metric: among any
three distinct generators at most one pair can have distance
less than one, so the triangle inequality is immediate.
For $\epsilon\geq0$, impose the ground constraints
\begin{equation}\label{eq:ladder-constraints}
 d(a_n,a_{n+1})\leq\epsilon,
 \qquad d(b_n,b_{n+1})\leq\epsilon,
 \qquad n\geq0.
\end{equation}
We write $\delta_\epsilon$ for the resulting free path
pseudometric. The graph can be visualized as two rails with
edges of cost $\epsilon$ between adjacent levels, joined at
level $n$ by a rung of cost $q^n$.

\begin{theorem}[Non-Lipschitz boundary dependence]
\label{thm:ladder-instability}
For the presentation \eqref{eq:ladder-generators}--
\eqref{eq:ladder-constraints},
\begin{equation}\label{eq:ladder-exact}
 \delta_\epsilon(a_0,b_0)
 =\inf_{n\geq0}\bigl(2n\epsilon+q^n\bigr).
\end{equation}
Let $c=\log(1/q)>0$. As $\epsilon\downarrow0$,
\begin{equation}\label{eq:ladder-asymptotic}
 \delta_\epsilon(a_0,b_0)
 =\frac{2}{c}\,\epsilon\log\frac1\epsilon
  +O(\epsilon).
\end{equation}
In particular, $\delta_\epsilon(a_0,b_0)\to0$ but the
function $\epsilon\mapsto\delta_\epsilon(a_0,b_0)$ is not
Lipschitz at $\epsilon=0$.
\end{theorem}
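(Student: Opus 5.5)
The plan is to prove \eqref{eq:ladder-exact} by matching an explicit path (upper bound) against a combinatorial lower bound valid for every finite path in the term graph, and then to read off \eqref{eq:ladder-asymptotic} from an elementary one-variable optimization. Since there are no operation symbols, $T_\Sigma(X)=X$ and there are no contextual edges. The graph on $X$ therefore has only three kinds of edges: generator edges of cost $d_X$, cost-one edges, and the rail edges $a_n\leftrightarrow a_{n+1}$, $b_n\leftrightarrow b_{n+1}$ of cost $\epsilon$.

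\emph{Upper bound.} For each $n\geq0$, consider the path that descends the $a$-rail from $a_0$ to $a_n$ ($n$ edges of cost $\epsilon$), crosses the rung $a_n\leftrightarrow b_n$ (cost $q^n$), and climbs the $b$-rail back to $b_0$. Its cost is $2n\epsilon+q^n$, so $\delta_\epsilon(a_0,b_0)\leq\inf_n(2n\epsilon+q^n)$ by \eqref{eq:path-distance}.

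\emph{Lower bound, the step I expect to need the most care.} Let $P$ be a finite path from $a_0$ to $b_0$.
\begin{enumerate}
\item If $P$ uses any edge of cost $\geq1$, its cost is at least $1=q^0$, which is the $n=0$ term. This covers the universal cost-one edges and every generator edge other than a rung.
\item Otherwise $P$ uses only rail edges and rungs $a_k\leftrightarrow b_k$. Since $P$ starts on the $a$-side and ends on the $b$-side, it uses at least one rung.
\end{enumerate}
In the second case, let the first rung used be at level $k$ and the last at level $k'$ (possibly the same rung). Before the first rung, $P$ stays on the $a$-rail and must travel from level $0$ to level $k$, so it uses at least $k$ rail edges. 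After the last rung it stays on the $b$-rail, so it uses at least $k'$ rail edges. If the first and last rungs are distinct, both costs are counted, giving
\[
\operatorname{cost}(P)\geq(k+k')\epsilon+q^k+q^{k'}\geq 2n\epsilon+q^n,\qquad n=\min\{k,k'\}.
\]
If they are the same rung, the bound is $2k\epsilon+q^k$ directly. Taking the infimum over $P$ gives \eqref{eq:ladder-exact}. This also confirms that the ground constraints introduce no shortcuts beyond the ladder.

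\emph{Asymptotics.} Set $g(t)=2t\epsilon+e^{-ct}$ for real $t\geq0$. For $\epsilon<c/2$ it is minimized at $t^*=c^{-1}\log\bigl(c/(2\epsilon)\bigr)$, with minimum value
\[
g(t^*)=\frac{2\epsilon}{c}\Bigl(\log\frac{c}{2\epsilon}+1\Bigr).
\]
The infimum over integers is at least $g(t^*)$. It is also at most $g(\lceil t^*\rceil)\leq g(t^*)+2\epsilon$, since $e^{-ct}$ decreases and the linear term grows by at most $2\epsilon$. Because $\log\bigl(c/(2\epsilon)\bigr)=\log(1/\epsilon)+O(1)$, this gives \eqref{eq:ladder-asymptotic}.

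\emph{Conclusion.} At $\epsilon=0$ the value is $\inf_n q^n=0$, and $\delta_\epsilon(a_0,b_0)\to0$ as $\epsilon\downarrow0$. However, $\delta_\epsilon(a_0,b_0)/\epsilon\sim(2/c)\log(1/\epsilon)\to\infty$, so the map $\epsilon\mapsto\delta_\epsilon(a_0,b_0)$ is not Lipschitz at $0$.
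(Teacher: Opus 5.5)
Your proof is correct and follows the paper's argument: the same ladder paths give the upper bound, the same minimization of $2t\epsilon+e^{-ct}$ with rounding to $\lceil t^*\rceil$ gives the asymptotics, and the lower bound is the same rail-counting argument. The only difference is in the lower bound. The paper bounds a rail-and-rung path by its greatest visited level $N$: the path has at least $2N$ rail edges and uses some rung of cost at least $q^N$, which avoids your case split on the first and last rung. Your version using the first and last rung levels $k,k'$ is equally valid.
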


\begin{proof}
For any integer $n\geq0$, follow the $a$-rail from $a_0$
to $a_n$, cross the rung $a_nb_n$, and return along the
$b$-rail to $b_0$. The total cost is
$2n\epsilon+q^n$, giving the upper bound in
\eqref{eq:ladder-exact}. The choice $n=0$ yields cost one.

A path that uses any additional generator edge of cost one
cannot improve this upper bound. Every other path consists
solely of rail edges and rungs. Let $N$ be its greatest
visited level. Starting and ending at level zero entails at
least $2N$ rail traversals, of total cost at least
$2N\epsilon$. The path must also cross from the $a$-rail
to the $b$-rail. Each rung used at a level at most $N$ has
cost at least $q^N$. Thus the total cost is at least
$2N\epsilon+q^N$. Taking the infimum over all paths
proves \eqref{eq:ladder-exact}.

For the asymptotic estimate, write $q^n=e^{-cn}$ and first
minimize the function
\[
 g_\epsilon(u)=2u\epsilon+e^{-cu},\qquad u\geq0,
\]
over real $u$. For sufficiently small $\epsilon>0$, the
minimum occurs at
\[
 u_* =\frac1c\log\frac{c}{2\epsilon},
 \qquad e^{-cu_*}=\frac{2\epsilon}{c}.
\]
Consequently,
\begin{equation}\label{eq:ladder-continuous-min}
 g_\epsilon(u_*)
 =\frac{2\epsilon}{c}
    \left(\log\frac{c}{2\epsilon}+1\right).
\end{equation}
This is a lower bound for the integer infimum. Taking
$n=\lceil u_*\rceil$ increases the linear term by at most
$2\epsilon$, while decreasing the exponential term, so the
integer infimum differs from
\eqref{eq:ladder-continuous-min} by at most $2\epsilon$.
Expansion of the logarithm proves
\eqref{eq:ladder-asymptotic}.

The right-hand side tends to zero as $\epsilon\downarrow0$.
At $\epsilon=0$, the infimum in
\eqref{eq:ladder-exact} is zero because $q^n\to0$. Finally,
\[
 \frac{\delta_\epsilon(a_0,b_0)
             -\delta_0(a_0,b_0)}{\epsilon}
 =\frac{2}{c}\log\frac1\epsilon+O(1)
 \longrightarrow\infty,
\]
which rules out a Lipschitz estimate near zero.
\end{proof}

\begin{remark}[Finite paths and limiting identification]
\label{rem:ladder-limit}
When $\epsilon=0$, each rail is identified along its
zero-cost edges. The distances $q^n$ between the resulting
rail classes tend to zero, so the two classes are also
identified by the path \emph{infimum}. Nevertheless, no
single finite path from $a_0$ to $b_0$ has cost exactly
zero, because every rung has positive cost. Every finite
subset of the rail constraints leaves $a_0$ and $b_0$
at positive distance, although their distance under the
full countable presentation is zero. This is why the
fixed-resolution qualification in
Proposition~\ref{prop:finite-fragments} cannot be replaced
by a bare requirement of nontriviality. It does not, by itself,
settle whether the same non-Lipschitz behavior can occur for
finitely presented universal quantitative theories.
\end{remark}

The exact truncation result of Section~\ref{sec:truncation}
and the ladder example thus exhibit different stability
regimes. In the first, the effect of relaxing associativity
is captured by a simple piecewise-linear distance formula.
In the second, a countable family of otherwise elementary
constraints creates logarithmic sensitivity at a boundary
of the tolerance space.

% ============================================================
% 7. CONCLUSION
% ============================================================

\section{Discussion and further questions}
\label{sec:discussion}

The question that motivated this investigation was broader
than the particular class of metric algebras studied here.
It arose from the possibility of treating abstract structural
descriptions not only as ways of understanding existing
mathematical objects, but also as starting points for
constructing mathematical structures. What became clearer
during the investigation, however, was that the existence
of a realization is only one part of this problem.
Equally important is understanding what a realization
retains from the mathematical setting with which we began.

In the quantitative setting considered here, this
distinction can be made precise. Algebraic laws and
regularity conditions constrain the possible metric
structures, while preservation requirements specify
which distances or distinctions we wish to retain.
The resulting problem is not simply whether a given
collection of laws has a model, but whether those laws
can be realized without sacrificing designated
mathematical information. When this is impossible,
we can investigate the extent to which the requirements
must be relaxed to recover the desired preservation.

The results illustrate several aspects of this
relationship. The geometry of admissible error budgets
provides a general description of preservation
feasibility, while finite weighted paths supply
certificates when particular requirements cannot
be satisfied. The averaging example makes the
tension especially explicit: exact associativity,
together with the prescribed averaging conditions,
forces a complete collapse of the metric structure.
Yet relaxing associativity by any positive amount
restores injectivity, and sufficiently large
tolerances recover the original metric isometrically.
Thus, the passage from an exact specification to
an approximate one can change not merely the
existence of realizations, but the precise amount
of information they preserve.

Another aspect of the investigation concerns how
these realizations respond to changes in the
specification itself. Although free distances depend
continuously on finite collections of error budgets,
the boundary-instability example shows that this
dependence need not be Lipschitz. Even elementary
metric constraints can therefore produce limiting
behavior that is not apparent from individual
finite stages of the construction. This suggests
that preservation and stability should be treated
as related but distinct questions.

Several directions remain to be investigated.
One is the simultaneous relaxation of multiple
algebraic requirements. In the averaging setting,
we obtained a necessary relation between the
idempotence and associativity defects, but an exact
characterization of the corresponding feasible
region remains unresolved here. Another direction
is to identify conditions under which preservation
regions admit finite descriptions or stronger
stability guarantees. It would also be interesting
to understand more generally when sharp obstruction
bounds characterize the geometry of the realizations
that attain them. These are questions arising from
the present investigation; their status relative
to the full existing literature requires separate
consideration.

The present results are confined to a particular
mathematical setting, and they do not establish a
general procedure for constructing arbitrary
mathematical theories from structural specifications.
Nevertheless, these results show how such a
perspective leads to mathematical questions
with precise answers. The free
construction supplies a realization, the preservation
conditions determine what that realization must
retain, and the obstruction and relaxation results
describe what the structural requirements make
possible.

In this sense, the original motivation has not
disappeared, but has taken a more definite
mathematical form. Beginning with structural
requirements leads naturally to questions not
only about what can be constructed, but also
about what must be preserved, what may be lost,
and what compromises are unavoidable.
We regard the investigation of these questions
as one possible direction toward a more
systematic understanding of mathematical
realization and the development of mathematical
structures from abstract specifications.

\section*{Declarations}

\subsection*{AI use declaration}
LLMs were used for editorial purposes in this
manuscript, all outputs were treated as 
suggestions rather than authoritative guidance,
and were inspected by the authors.

% Bibliography generated with the journal-supplied spmpsci.bst,
% then incorporated here as instructed by AUsample.tex.
% Do not use a separate references.bib file in the submission archive.

\end{document}